\theoremstyle{definition}
\newtheorem{definition}{Definition}[section]
\newtheorem{example}[definition]{Example}
\newtheorem{remark}[definition]{Remark}
\theoremstyle{plain}
\newtheorem{lemma}[definition]{Lemma}
\newtheorem{proposition}[definition]{Proposition}
\newtheorem{theorem}[definition]{Theorem}
\newtheorem{corollary}[definition]{Corollary}
\renewcommand{\tocsection}[3]{%
\indentlabel{\@ifnotempty{#2}{\makebox[1.50em][l]{\ignorespaces#1#2.}}}#3}
\renewcommand{\tocsubsection}[3]{%
\indentlabel{\@ifnotempty{#2}{\hspace*{1.50em}\makebox[2.25em][l]{\ignorespaces#1#2.}}}#3}
\renewcommand{\tocsubsubsection}[3]{%
\indentlabel{\@ifnotempty{#2}{\hspace*{3.75em}\makebox[3.00em][l]{\ignorespaces#1#2.}}}#3}
\begin{document}

\title[Linear operators on associative algebras]
{A new classification of algebraic identities for linear operators on associative algebras}

\author{Murray R. Bremner}

\address{Department of Mathematics and Statistics,
University of Saskatchewan,
Saskatoon, Canada}

\email{bremner@math.usask.ca}

\author{Hader A. Elgendy}

\address{Department of Mathematics, Faculty of Science, Damietta University, New Damietta, Egypt}

\email{haderelgendy42@hotmail.com}

\subjclass[2010]{Primary
47C05.  	
Secondary
13P10,  	
13P15,  	
17B38,  	
18M70,  	
39B42,  	
47-08.  	
}

\keywords{%
Linear operators,
associative algebras,
polynomial identities,
algebraic operads,
commutative algebra,
Gr\"obner bases,
linear algebra over polynomial rings.%
}

\thanks{The first author was supported by the Discovery Grant \emph{Algebraic Operads} from NSERC,
the Natural Sciences and Engineering Research Council of Canada.
The second author was supported by a Postdoctoral Fellowship from
the Ministry of Higher Education of Egypt.}

\begin{abstract}
We introduce a new approach to the classification of operator identities,
based on basic concepts from the theory of algebraic operads
together with computational commutative algebra applied to determinantal ideals
of matrices over polynomial rings.
We consider operator identities of degree 2 (the number of variables in each term)
and multiplicity 1 or 2 (the number of operators in each term),
but our methods apply more generally.
Given an operator identity with indeterminate coefficients,
we use partial compositions to construct a matrix of consequences,
and then use computer algebra to determine the values of the indeterminates
for which this matrix has submaximal rank.
For multiplicity 1 we obtain six identities, including the derivation identity.
For multiplicity 2 we obtain eighteen identities and two parametrized families,
including the left and right averaging identities, the Rota-Baxter identity,
the Nijenhuis identity, and some new identities which deserve further study.
\end{abstract}

\maketitle

{\footnotesize\tableofcontents}


\section{Introduction}


\subsection{Rota's problem}

Rota \cite{Rota1995} posed the problem of classifying the algebraic identities that
can be satisfied by a linear operator on an associative algebra:
``%
{\dots}~%
[A] powerful technique for the study of an algebra
is the classification of its endomorphisms and their infinitesimal analogues, namely derivations.
{\dots}~%
[T]he best known example is the Galois theory of fields, which depends on the classification of
the automorphisms of a field, or, for fields of positive characteristic, of the derivations of the field.
{\dots}~%
[B]oth endomorphisms and derivations of an algebra are instances of linear operators
satisfying algebraic identities.
An endomorphism $E$ {\dots}~%
satisfies the identity $E(xy) = E(x)E(y)$, and
a derivation satisfies the identity $D(xy) = D(x)y + xD(y)$.
{\dots}~%
[O]ther linear operators satisfying algebraic identities may be of equal importance
in studying certain algebraic phenomena
\dots.''
He further states:
``I have posed the problem of finding all possible algebraic identities
that can be satisfied by a linear operator on an [associative] algebra.
Simple computations show that the possibilities are very few,
and the problem of classifying all such identities is very probably completely solvable.''


\subsection{Basic definitions}

An \emph{associative algebra} is a vector space $A$ with a bilinear operation
$B\colon A \times A \to A$ which is denoted by juxtaposition $(a,b) \mapsto B(a,b) = ab$
and which satisfies the relation $(ab)c = a(bc)$ for all $a, b, c \in A$.
A \emph{monomial of degree $p$} in $A$ is a product $a_1 a_2 \cdots a_p$ where the $p$ indeterminates
represent arbitrary elements of $A$.
We write $L\colon A \to A$ for an arbitrary linear operator on
(the underlying vector space of) $A$.
An \emph{operator monomial of degree $p$ and multiplicity $q$} is a monomial of degree $p$
which contains $q$ occurrences of $L$.
We assume that each monomial contains the same permutation of the indeterminates.
For the related topic of unary (or operated) semigroups, see Guo \cite{Guo2009}.

For example, there are three operator monomials of degree 2 and multiplicity 1, namely
$L(xy)$, $L(x)y$, $xL(y)$, and six operator monomials of degree 2 and multiplicity 2, namely
$L^2(xy)$, $L(L(x)y)$, $L^2(x)y$, $L(xL(y))$, $L(x)L(y)$, $xL^2(y)$.

An \emph{operator polynomial} $I$ is a linear combination of operator monomials of the same degree.
An \emph{operator identity} is an equation of the form $I_1 = I_2$ which is assumed
to hold for all values of the indeterminates.
A \emph{homogeneous} operator identity is one in which each term has the same multiplicity.

Table \ref{wellknown} displays some well-known operator identities.
For further information,
see
Gao \& Zhang \cite{GaoZhang2018},
Gao, Lei \& Zhang \cite{GaoLeiZhang2018},
Guo \cite{Guo2012},
Lei \& Guo \cite{LeiGuo2012},
Pei \& Gao \cite{PeiGuo2015}, and
Rota \cite{Rota1964}.
For early work on the classification problem for operator identities,
see Targonski \cite{Targonski1967},
Dhombres \cite{Dhombres1971},
and Freeman \cite{Freeman1972}.
For recent papers more closely related to our approach,
see Guo, Sit \& Zhang \cite{GSZ1,GSZ2} and Gao \& Guo \cite{GaoGuo2017}.

\begin{table}[ht]
$\begin{array}{l@{\qquad}l}
Homogeneous \\[2pt]
\text{Derivation} & L(xy) = L(x)y + xL(y) \\[2pt]
\text{Rota-Baxter} & L(x)L(y) = L(L(x)y) + L(xL(y)) \\[2pt]
\text{Left averaging} & L(x)L(y) = L(L(x)y) \\[2pt]
\text{Right averaging} & L(x)L(y) = L(xL(y)) \\[2pt]
\text{Nijenhuis} & L(x)L(y) = L(xL(y)) + L(L(x)y) - L^2(xy) \\[6pt]
Inhomogeneous \\[2pt]
\text{Endomorphism} & L(xy) = L(x)L(y) \\[2pt]
\text{Derivation, weight $\gamma \ne 0$} & L(xy) = L(x)y + xL(y) + \gamma L(x)L(y) \\[2pt]
\text{Rota-Baxter, weight $\gamma \ne 0$} & L(x)L(y) = L(L(x)y) + L(xL(y)) + \gamma L(xy) \\[2pt]
\text{Reynolds} & L(x)L(y) = L(L(x)y) + L(xL(y)) - L(L(x)L(y))
\end{array}$
\medskip
\caption{Homogeneous and inhomogeneous operator identities}
\label{wellknown}
\end{table}


\subsection{Results of this paper}

We introduce a new approach to the classification of operator identities,
using combinatorics of operator monomials,
basic concepts from the theory of algebraic operads,
and computational commutative algebra.

We start with an operator identity with indeterminate coefficients.
We apply the operadic concept of partial composition
to produce a set of new identities for which both the degree and multiplicity
increase by 1.
We call these new identities the \emph{consequences} of the original identity.
We store the coefficient vectors of these consequences in a matrix over
the polynomial ring generated by the indeterminates.
We call this the \emph{matrix of consequences} of the original identity.
We introduce what we call the \emph{rank principle}, namely that significant operator identities
correspond those values of the coefficients which produce submaximal ranks
of the matrix of consequences.
We use computational commutative algebra to determine how the rank depends on the coefficients.
This approach provides a justification of Rota's claim that
the possible identities are very few.
Our main results are Theorems \ref{theorem21} and \ref{theorem22}.
For degree 2 and multiplicity 1 we obtain six identities, including the derivation identity.
For degree 2 and multiplicity 2 we obtain eighteen identities and two parametrized families,
including the left and right averaging identities, the Rota-Baxter identity,
the Nijenhuis identity, and some new identities  which deserve further study.

We consider only operator identities of degree 2
which are homogeneous of multiplicity 1 or 2.
We assume the base field has characteristic 0.
However, our methods apply in principle to more general problems, such as
inhomogeneous operator identities,
identities whose terms contain different permutations of the arguments,
identities involving more than one linear operator,
two or more simultaneous identities for the same operator,
identities for operators on nonassociative algebras,
identities for algebras where the operation is $n$-ary for $n \ge 3$,
and
identities over fields of positive characteristic.


\section{Narayana numbers and operator monomials}

\begin{definition}
For $i \ge j \ge 1$ the \emph{Narayana numbers} are defined by
\[
N(i,j) = \frac1i \binom{i}{j} \binom{i}{j{-}1}.
\]
\end{definition}

Narayana numbers have many different combinatorial interpretations;
see the OEIS \cite[sequence A001263]{OEIS} and
Petersen \cite[Chapter 2, especially \S2.4 on Dyck paths and Dyck words]{P}.

\begin{definition}
For $i \ge 1$, consider strings of length $2i$ consisting of
$i$ left parentheses and $i$ right parentheses.
Such a string is \emph{balanced} if in every initial substring
the number of right parentheses is no greater than the number of left parentheses.
A \emph{nesting} is a substring of the form $()$.
\end{definition}

\begin{lemma}
\label{narayanalemma}
$N(i,j)$ is the number of balanced strings of length $2i$ which contain $j$ nestings.
\end{lemma}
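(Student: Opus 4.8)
The plan is to translate the problem into the language of lattice paths and then count by a reflection (equivalently Lindstr\"om--Gessel--Viennot) argument. First I would set up the dictionary: reading a balanced string from left to right, replace each left parenthesis by an up-step $U$ and each right parenthesis by a down-step $D$; the balance condition is exactly the requirement that the resulting path of length $2i$ never drops below the horizontal axis, i.e. it is a Dyck path of semilength $i$. Under this dictionary a nesting $()$ becomes an occurrence of the factor $UD$, that is, a \emph{peak}. So the claim is equivalent to the assertion that $N(i,j)$ counts Dyck paths of semilength $i$ having exactly $j$ peaks.

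Next I would decompose such a path into its maximal runs. A Dyck path with exactly $j$ peaks is precisely a word $U^{a_1}D^{b_1}U^{a_2}D^{b_2}\cdots U^{a_j}D^{b_j}$ with all $a_k,b_k\ge 1$, since each block $U^{a_k}D^{b_k}$ contributes exactly one peak while the junctions $D^{b_k}U^{a_{k+1}}$ contribute valleys rather than peaks. Writing $A_k=a_1+\cdots+a_k$ and $B_k=b_1+\cdots+b_k$, the equalities $A_j=B_j=i$ together with the Dyck (non-negativity) condition reduce the count to the number of pairs of strictly increasing sequences $1\le A_1<\cdots<A_{j-1}\le i-1$ and $1\le B_1<\cdots<B_{j-1}\le i-1$ satisfying $A_k\ge B_k$ for every $k$; here I would check that the valley floors are exactly the heights $A_k-B_k$, so that these inequalities capture non-negativity exactly.

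With the combinatorics reduced to a domination count, I would evaluate it. Encoding the two $(j-1)$-element subsets of $\{1,\dots,i-1\}$ as a pair of monotone lattice paths and imposing the domination $A_k\ge B_k$ as a non-intersection condition (after the standard shift of one path by a constant to turn the weak inequalities into strict ones), the Lindstr\"om--Gessel--Viennot lemma expresses the answer as the $2\times 2$ determinant
\[
\det\begin{pmatrix}\binom{i-1}{j-1} & \binom{i-1}{j}\\ \binom{i-1}{j-2} & \binom{i-1}{j-1}\end{pmatrix}
=\binom{i-1}{j-1}^{2}-\binom{i-1}{j}\binom{i-1}{j-2}.
\]
Finally I would verify the elementary binomial identity that this determinant equals $\tfrac1i\binom{i}{j}\binom{i}{j-1}$, completing the proof.

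I expect the main obstacle to be the middle step: correctly matching the Dyck non-negativity condition to the componentwise domination $A_k\ge B_k$, and then pinning down the index shift so that domination becomes genuine non-intersection, which is what makes the reflection argument produce the precise determinant (and hence the factor $1/i$). The run decomposition and the final binomial simplification are routine. As an alternative that sidesteps the lattice-path bookkeeping, one could instead derive the functional equation $xy^2-(1+x-xt)y+1=0$ for the generating function $y(x,t)$ enumerating Dyck paths by semilength and number of peaks from the first-return decomposition of Dyck paths, and then extract the coefficients by Lagrange inversion; this is self-contained but trades the combinatorial insight for a computation. For background on Dyck paths and peaks I would follow Petersen \cite{P}.
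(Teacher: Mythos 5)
Your argument is correct, but note that the paper itself never proves Lemma \ref{narayanalemma}: it is stated as a known fact, with pointers to OEIS sequence A001263 and to Petersen \cite[Chapter 2, \S2.4]{P} --- precisely the Dyck-path-by-peaks interpretation you begin with. So there is no internal proof to compare against; what you have written is a correct self-contained derivation of the standard fact the paper quotes. The individual steps all check out. The dictionary (left parenthesis $\mapsto U$, right parenthesis $\mapsto D$) turns the balance condition into nonnegativity and nestings into peaks, and is the same encoding the paper exploits later in the proof of Lemma \ref{dimensionlemma}. The run decomposition $U^{a_1}D^{b_1}\cdots U^{a_j}D^{b_j}$ with $a_k,b_k\ge 1$ bijectively encodes paths with exactly $j$ peaks, and since the minima of such a path occur only at the valleys, nonnegativity is exactly $A_k\ge B_k$ for $1\le k\le j-1$ (the case $k=j$ being automatic from $A_j=B_j=i$); the bounds $1\le A_1<\cdots<A_{j-1}\le i-1$ follow from $a_1\ge 1$ and $a_j\ge 1$. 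The domination count does evaluate, via Lindstr\"om--Gessel--Viennot after the usual unit shift of one path, to
\[
\binom{i-1}{j-1}^{2}-\binom{i-1}{j}\binom{i-1}{j-2},
\]
and the final identity is a two-line computation: setting $n=i-1$, $m=j-1$, both this determinant and $\tfrac1i\binom{i}{j}\binom{i}{j-1}$ simplify to $\binom{n}{m}^{2}\,\tfrac{n+1}{(m+1)(n-m+1)}$. Two small points to make explicit if you write this up: the edge case $j=1$ needs the convention $\binom{i-1}{-1}=0$ (empty sequences; the unique path $U^iD^i$), which the determinant formula handles correctly; and you should state that domination of subsets corresponds to noncrossing of the associated monotone paths only after the shift, since the weak inequalities $A_k\ge B_k$ must become strict noncrossing for LGV to apply. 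Your alternative route via the first-return functional equation and Lagrange inversion would also work, but either argument is more than the paper supplies, as the authors are content to cite the literature here.
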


\begin{definition}
Let $\mathcal{M}(p,q)$ be the set of all operator monomials of
degree $p$ and multiplicity $q$.
Let $\mathcal{O}(p,q)$ be the vector space with basis $\mathcal{M}(p,q)$.
An \emph{operator identity of degree $p$ and multiplicity $q$} is
any (nonzero) element of $\mathcal{O}(p,q)$.
\end{definition}

\begin{lemma}
\label{dimensionlemma}
For $p \ge 1$ and $q \ge 0$ we have
\[
\dim \mathcal{O}(p,q) = N(p{+}q,p) = \frac{1}{p{+}q} \binom{p{+}q}{p} \binom{p{+}q}{p{-}1}.
\]
\end{lemma}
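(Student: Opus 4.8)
The plan is to reduce the computation of $\dim \mathcal{O}(p,q)$ to counting balanced parenthesis strings and then to apply Lemma \ref{narayanalemma}. Since $\mathcal{M}(p,q)$ is by definition a basis of $\mathcal{O}(p,q)$, we have $\dim \mathcal{O}(p,q) = |\mathcal{M}(p,q)|$, and it suffices to construct a bijection between $\mathcal{M}(p,q)$ and the set of balanced strings of length $2(p{+}q)$ that contain exactly $p$ nestings. I would define an encoding map $\phi$ on operator monomials by two local rules: replace each indeterminate by the string $()$, and replace each application $L(\,\cdot\,)$ by a pair of parentheses enclosing the encoding of its argument. The essential point, which I would emphasize, is that associativity means a product of several factors carries no internal parenthesization, so the only parentheses $\phi$ introduces are the $p$ pairs from the indeterminates and the $q$ pairs from the operators; it is exactly this feature that produces a Narayana count rather than a larger Catalan-type count.

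Next I would verify that $\phi(M)$ lands in the claimed target set. It has $p{+}q$ left and $p{+}q$ right parentheses, so its length is $2(p{+}q)$, and reading $M$ from the outside inward shows the string is balanced. For the nesting count I would argue that the $p$ empty pairs arising from the indeterminates are themselves nestings, while no operator pair creates a nesting: the left parenthesis contributed by an $L$ is immediately followed by the first symbol of its (nonempty) argument, which is again a left parenthesis, and its matching right parenthesis is immediately preceded by a right parenthesis. Since every operator in a genuine monomial is applied to a nonempty subexpression, the number of nestings is exactly $p$.

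Finally I would construct the inverse: given a balanced string of length $2(p{+}q)$ with $p$ nestings, match parentheses in the standard way, read the $p$ empty pairs from left to right and label them with the indeterminates in the fixed order, and interpret each of the remaining $q$ nonempty pairs as $L$ applied to the operator monomial encoded by its interior. An induction on the length shows that this yields a well-formed operator monomial and inverts $\phi$. Then $|\mathcal{M}(p,q)| = N(p{+}q,p)$ by Lemma \ref{narayanalemma} with $i = p{+}q$ and $j = p$, and the explicit product formula is just the definition of $N(p{+}q,p)$. I expect the main obstacle to be the two verifications that together show the nesting statistic is preserved and that the inverse is well defined, namely confirming that operator pairs neither create nor destroy nestings and that the reconstruction always produces a valid monomial respecting the fixed permutation of the arguments.
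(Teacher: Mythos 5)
Your proof is correct and is essentially the paper's argument: the bijection you construct between $\mathcal{M}(p,q)$ and balanced strings of length $2(p{+}q)$ with $p$ nestings is exactly the inverse of the paper's three-step algorithm (replace nestings by $\ast$, insert $L$ before each remaining left parenthesis, label arguments left to right), followed by the same appeal to Lemma \ref{narayanalemma}. The only difference is presentational --- you encode monomials as strings and verify the nesting statistic and well-definedness explicitly, where the paper decodes strings into monomials and declares the bijection clear.
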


\begin{proof}
Let $\mathcal{S}(p,q)$ be the set of all strings
containing $2(p{+}q)$ balanced pairs of parentheses, $p$ of which are nestings.
By Lemma \ref{narayanalemma} we have $| \mathcal{S}(p,q) | = N(p{+}q,p)$.
Apply the following algorithm to each string in $\mathcal{S}(p,q)$:
\begin{itemize}
\item[1)]
Replace each nesting by the argument symbol $\ast$.
\item[2)]
Insert the operator symbol $L$
immediately to the left of each remaining left parenthesis,
and leave the remaining right parentheses unchanged.
\item[3)]
Replace the argument symbols from left to right by $a_1 \cdots a_p$.
\end{itemize}
For example,
\[
(((())()())())()
\,\longmapsto\,
((({\ast}){\ast}{\ast}){\ast}){\ast}
\,\longmapsto\,
L(L(L({\ast}){\ast}{\ast}){\ast}){\ast}
\,\longmapsto\,
L(L(L(v)wx)y)z.
\]
Clearly this process gives a bijection between $\mathcal{S}(p,q)$ and $\mathcal{M}(p,q)$.
\end{proof}

\begin{definition}
\label{monomialorder}
The bijection in the proof of Lemma \ref{dimensionlemma} induces
a \emph{lexicographic order} on operator monomials $v, w \in \mathcal{M}(p,q)$.
Replace $v$ and $w$ by the corresponding strings $s(v)$ and $s(w)$ of parentheses.
Replace left and right parentheses in $s(v)$ and $s(w)$ by the letters $a$ and $b$ respectively
obtaining the words $t(v)$ and $t(w)$.
Then $v \prec w$ if and only if $t(v) \prec t(w)$ in dictionary order.
We used this order in Table \ref{p2q123}.
\end{definition}

Table \ref{p2q123} displays the bijection and the lexicographical order
for $p = 2$, $1 \le q \le 3$.

\begin{table}[ht]
$
\begin{array}{lrll}
q = 1
&\qquad  1  &\qquad  (()())  &\qquad  L(xy)  \\
&\qquad  2  &\qquad  (())()  &\qquad  L(x)y  \\
&\qquad  3  &\qquad  ()(())  &\qquad  xL(y)  \\[2mm]
q = 2
&\qquad  1  &\qquad  ((()()))  &\qquad  L(L(xy)) = L^2(xy) \\
&\qquad  2  &\qquad  ((())())  &\qquad  L(L(x)y)  \\
&\qquad  3  &\qquad  ((()))()  &\qquad  L(L(x))y = L^2(x)y \\
&\qquad  4  &\qquad  (()(()))  &\qquad  L(xL(y))  \\
&\qquad  5  &\qquad  (())(())  &\qquad  L(x)L(y)  \\
&\qquad  6  &\qquad  ()((()))  &\qquad  xL(L(y)) = xL^2(y) \\[2mm]
q = 3
&\qquad  1  &\qquad  (((()())))  &\qquad  L(L(L(xy))) = L^3(xy) \\
&\qquad  2  &\qquad  (((())()))  &\qquad  L(L(L(x)y)) = L^2(L(x)y) \\
&\qquad  3  &\qquad  (((()))())  &\qquad  L(L(L(x))y) = L(L^2(x)y) \\
&\qquad  4  &\qquad  (((())))()  &\qquad  L(L(L(x)))y = L^3(x)y \\
&\qquad  5  &\qquad  ((()(())))  &\qquad  L(L(xL(y))) = L^2(xL(y)) \\
&\qquad  6  &\qquad  ((())(()))  &\qquad  L(L(x)L(y))  \\
&\qquad  7  &\qquad  ((()))(())  &\qquad  L(L(x))L(y) = L^2(x)L(y) \\
&\qquad  8  &\qquad  (()((())))  &\qquad  L(xL(L(y))) = L(xL^2(y)) \\
&\qquad  9  &\qquad  (())((()))  &\qquad  L(x)L(L(y)) = L(x)L^2(y) \\
&\qquad 10  &\qquad  ()(((())))  &\qquad  xL(L(L(y))) = xL^3(y) \\
\end{array}
$
\bigskip
\caption{Operator monomials of degree 2 and multiplicities 1, 2, 3.}
\label{p2q123}
\end{table}


\section{Partial compositions in algebraic operads}

We refer to the monographs \cite{BD-book,LV-book,MSS-book}
for detailed expositions of the theory of algebraic operads.
In particular, \cite[Chapters 7-10]{BD-book} discusses applications of
computational commutative algebra to classification of operads,
and similar methods will be applied later in this paper.

\begin{definition}
Let $m \in \mathcal{M}(p,q)$.
For $1 \le i \le p$, we write $m \circ_i B$ for
the \emph{$i$-th partial composition} of $m$ with $B$:
introduce a new indeterminate $a_{p+1}$,
replace the $i$-th argument $a_i$ in $m$ by $a_i a_{p+1}$,
and then replace the subscripts of the indeterminates
by the identity permutation.
Clearly $m \circ_i B \in \mathcal{M}(p{+}1,q)$.
\end{definition}

\begin{example}
If $m = L(L(x)yL(z))$ then
\begin{align*}
m \circ_1 B &= L(L(wx)yL(z)), \\
m \circ_2 B &= L(L(w)xyL(z)), \\
m \circ_3 B &= L(L(w)xL(yz)).
\end{align*}
\end{example}

\begin{definition}
Let $m \in \mathcal{M}(p,q)$.
For $1 \le j \le 2$, we write $B \circ_j m$ for
the \emph{$j$-th partial composition} of $B$ with $m$:
introduce a new indeterminate $a_{p+1}$,
multiply $m$ on the right ($j = 1$) or the left ($j = 2$) by $a_{p+1}$,
and then replace the subscripts of the indeterminates
by the identity permutation.
Clearly $B \circ_j m \in \mathcal{M}(p{+}1,q)$.
\end{definition}

\begin{example}
If $m = L(L(x)yL(z))$ then
\begin{align*}
B \circ_1 m &= L(L(w)xL(y))z, \\
B \circ_2 m &= wL(L(x)yL(z)).
\end{align*}
\end{example}

\begin{definition}
Let $m \in \mathcal{M}(p,q)$.
For $1 \le i \le p$, we write $m \circ_i L$ for
the \emph{$i$-th partial composition} of $m$ with $L$:
apply the linear operator $L$ to the $i$-th argument $a_i$ of $m$.
Clearly $m \circ_i L \in \mathcal{M}(p,q{+}1)$.
\end{definition}

\begin{example}
If $m = L(L(x)yL(z))$ then
\begin{align*}
m \circ_1 L &= L(L^2(x)yL(z)), \\
m \circ_2 L &= L(L(x)L(y)L(z)), \\
m \circ_3 L &= L(L(x)yL^2(z)).
\end{align*}
\end{example}

\begin{definition}
Let $m \in \mathcal{M}(p,q)$.
We write $L \circ m$ for the \emph{partial composition} of $L$ with $m$:
apply the linear operator $L$ to $m$.
Clearly $L \circ m \in \mathcal{M}(p,q{+}1)$.
\end{definition}

\begin{example}
If $m = L(L(x)yL(z))$ then
\[
L \circ m = L^2(L(w)xL(y))z.
\]
\end{example}

\begin{remark}
Partial composition extends linearly from $\mathcal{M}(p,q)$ to $\mathcal{O}(p,q)$.
We obtain $p{+}2$ linear maps from $\mathcal{O}(p,q)$ to $\mathcal{O}(p{+}1,q)$:
\[
- \circ_i B \quad (1 \le i \le p),
\qquad
B \circ_j - \quad (1 \le j \le 2),
\]
and $p{+}1$ linear maps from $\mathcal{O}(p,q)$ to $\mathcal{O}(p,q{+}1)$:
\[
- \circ_i L \quad (1 \le i \le p),
\qquad
L \circ -.
\]
\end{remark}

\begin{definition}
Let $R \in \mathcal{O}(p,q)$.
By a \emph{consequence} of $R$ we mean the new operator identity of
higher degree and/or multiplicity obtained by applying to $R$
any sequence of partial compositions with $B$ and/or $L$.
\end{definition}


\section{Rota's problem in terms of linear algebra}

Consider an operator identity $R \in \mathcal{O}(p,q)$.
We want to understand how $R$ interacts with
the associative product and the linear operator.
Hence we consider the consequences of $R$ in $\mathcal{O}(p{+}1,q{+}1)$.
We obtain consequences of degree $p{+}1$ and multiplicity $q{+}1$ in two ways:
by increasing the degree first and then the multiplicity,
or the multiplicity first and then the degree.

\begin{lemma}
\label{case1}
Assume $R \in \mathcal{O}(p,q)$.
Increasing $p$ first and then $q$,
we obtain $(p{+}2)^2$ consequences in $\mathcal{O}(p{+}1,q{+}1)$
where $1 \le i \le p$, $1 \le j \le 2$, and $1 \le k \le p{+}1$:
\[
( R \circ_i B ) \circ_k L, \qquad
( B \circ_j R ) \circ_k L, \qquad
L \circ ( R \circ_i B ), \qquad
L \circ ( B \circ_j R ).
\]
\end{lemma}

\begin{lemma}
\label{case2}
Assume $R \in \mathcal{O}(p,q)$.
Increasing $q$ first and then $p$,
we obtain $(p{+}1)(p{+}2)$ consequences in $\mathcal{O}(p{+}1,q{+}1)$
where $1 \le i, k \le p$ and $1 \le j \le 2$:
\[
( R \circ_i L ) \circ_k B, \qquad
( L \circ R ) \circ_k B, \qquad
B \circ_j ( R \circ_i L ), \qquad
B \circ_j ( L \circ R ).
\]
\end{lemma}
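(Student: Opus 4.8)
The plan is to treat this as a direct enumeration of the two-stage composite operations, exactly mirroring the argument for Lemma \ref{case1} but with the order of the two stages reversed. First I would recall, from the Remark following the partial-composition definitions, that raising the multiplicity of an element of $\mathcal{O}(p,q)$ is effected by precisely $p{+}1$ linear maps into $\mathcal{O}(p,q{+}1)$: the $p$ maps $-\circ_i L$ for $1 \le i \le p$, together with the single map $L \circ -$. Applying any one of these to $R$ yields an intermediate identity lying in $\mathcal{O}(p,q{+}1)$, so there are $p{+}1$ possible outcomes of the first stage.

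Next I would observe that this intermediate identity still has degree $p$, since raising the multiplicity does not change the degree. Consequently, raising its degree into $\mathcal{O}(p{+}1,q{+}1)$ is effected by precisely $p{+}2$ linear maps: the $p$ maps $-\circ_k B$ for $1 \le k \le p$ (one for each argument), together with the two maps $B \circ_j -$ for $1 \le j \le 2$ (multiplication on the right or the left). Composing a first-stage map with a second-stage map, I would split the resulting consequences into four families according to which kind of map is used at each stage; these are exactly the four expressions displayed in the statement. Counting each family gives $p \cdot p$ for $(R \circ_i L)\circ_k B$, then $p$ for $(L \circ R)\circ_k B$, then $2p$ for $B \circ_j (R \circ_i L)$, and finally $2$ for $B \circ_j (L \circ R)$, and summing yields $p^2 + p + 2p + 2 = (p{+}1)(p{+}2)$, as claimed.

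The only point that requires genuine care — and thus the main (if mild) obstacle — is keeping the index ranges straight. Because the multiplicity is raised first, the degree-raising maps in the second stage act on an object that still has degree $p$, so the argument index $k$ in $-\circ_k B$ runs only up to $p$ rather than $p{+}1$. This is precisely the asymmetry that separates the present count from the $(p{+}2)^2$ of Lemma \ref{case1}: there the degree is raised first, so the subsequent multiplicity-raising map sees $p{+}1$ arguments and contributes an index running to $p{+}1$. Once this bookkeeping is fixed, the result is immediate, and no further computation is needed.
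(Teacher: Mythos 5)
Your proof is correct, and it is precisely the argument the paper intends: the lemma is stated without an explicit proof because it follows immediately from the Remark's enumeration of the $p{+}1$ multiplicity-raising maps and the $p{+}2$ degree-raising maps, which is exactly the two-stage count you carry out. Your observation about the index bookkeeping (that $k$ runs only to $p$ here, versus $p{+}1$ in Lemma \ref{case1}, since the intermediate identity still has degree $p$) correctly identifies the asymmetry that yields $(p{+}1)(p{+}2)$ rather than $(p{+}2)^2$.
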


\begin{corollary}
If $R \in \mathcal{O}(p,q)$ then there are $(p{+}2)(2p{+}3)$
consequences of $R$ in $\mathcal{O}(p{+}1,q{+}1)$.
These consequences are not necessarily distinct.
\end{corollary}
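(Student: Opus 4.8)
The plan is to obtain this count as the immediate sum of the two preceding lemmas, after first justifying that those two lemmas together enumerate every consequence landing in $\mathcal{O}(p{+}1,q{+}1)$. The key preliminary observation is that among the five families of partial composition maps from the Remark, the three maps $-\circ_i B$ and $B \circ_j -$ raise the degree by $1$ while leaving the multiplicity fixed, whereas the two maps $-\circ_i L$ and $L \circ -$ raise the multiplicity by $1$ while leaving the degree fixed. Hence any sequence of partial compositions carrying $R \in \mathcal{O}(p,q)$ to $\mathcal{O}(p{+}1,q{+}1)$ must consist of exactly one degree-raising composition and exactly one multiplicity-raising composition. There are only two orders in which these can be performed: degree first then multiplicity, or multiplicity first then degree. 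These two orders are precisely the two cases treated in Lemmas \ref{case1} and \ref{case2}.

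Next I would simply add the two counts. Lemma \ref{case1} supplies $(p{+}2)^2$ consequences and Lemma \ref{case2} supplies $(p{+}1)(p{+}2)$ consequences, so the total is
\[
(p{+}2)^2 + (p{+}1)(p{+}2) = (p{+}2)\bigl[(p{+}2) + (p{+}1)\bigr] = (p{+}2)(2p{+}3),
\]
where the factorization pulls out the common factor $(p{+}2)$ and combines the remaining linear terms. This is a one-line computation, so no genuine obstacle arises in the arithmetic itself.

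The one point that deserves care, rather than being a difficulty, is the final sentence of the statement: these $(p{+}2)(2p{+}3)$ consequences are \emph{not necessarily distinct} as elements of $\mathcal{O}(p{+}1,q{+}1)$. I would emphasize that the count records the number of \emph{constructions}, i.e.\ the number of ordered pairs (degree-raising map, multiplicity-raising map) applied in one of the two admissible orders, and that different constructions may yield the same operator identity once the compositions are carried out. For this reason the corollary claims only an upper bound on the number of linearly independent consequences, which is exactly what is needed for the rank computations to follow. Thus the proof is essentially the bookkeeping argument above: confirm exhaustiveness via the degree/multiplicity dichotomy, sum the two lemma counts, and factor.
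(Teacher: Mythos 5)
Your proposal is correct and matches the paper's (implicit) argument exactly: the corollary follows by adding the counts $(p{+}2)^2$ and $(p{+}1)(p{+}2)$ from Lemmas \ref{case1} and \ref{case2} and factoring to get $(p{+}2)(2p{+}3)$, with the caveat that these enumerate constructions rather than distinct elements of $\mathcal{O}(p{+}1,q{+}1)$. Your added observation that the two lemmas are exhaustive because any route from $\mathcal{O}(p,q)$ to $\mathcal{O}(p{+}1,q{+}1)$ uses exactly one degree-raising and one multiplicity-raising composition in one of two orders is precisely the bookkeeping the paper leaves unstated.
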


\begin{definition}
Let $R \in \mathcal{O}(p,q)$.
We write $M(R)$ for the \emph{matrix of consequences} of $R$,
which has size
$(p{+}2)(2p{+}3) \,\times\, N(p{+}q{+}2,p{+}1)$.
The $(i,j)$ entry of $M(R)$ is the
coefficient of the $j$-th monomial in $\mathcal{M}(p{+}1,q{+}1)$
in the $i$-th consequence of $R$ from Lemmas \ref{case1} and \ref{case2}.
If $R$ has indeterminate coefficients then
$M(R)$ is a matrix over the polynomial ring in $N(p{+}q,p)$ variables.
\end{definition}

We now reformulate Rota's problem in terms of linear algebra over polynomial rings.
Let $R$ be an operator identity of degree $p$ and multiplicity $q$
with indeterminate coefficients.
Our goal is to understand how the rank of $M(R)$ depends
on the values of the indeterminates.

In this paper we consider the cases $p = 2$ and $q = 1, 2$.
We find that very few values of the rank of $M(R)$ are possible,
and that each possible submaximal rank corresponds to a small set of coefficient vectors.
This is how we obtain our new classification of operator identities.


\section{Degree 2, multiplicity 1}
\label{degtwomulone}


\subsection{Constructing the matrix of consequences}
\label{construct21}

Since we consider only operator monomials with the identity permutation of the arguments,
it is unambiguous to use the argument symbol $\ast$.
Hence the general $R \in \mathcal{O}(2,1)$ has the form
\[
R = a \, L({\ast}{\ast}) + b \, L({\ast}) {\ast} + c \, {\ast} L({\ast}),
\]
where $a, b, c$ are indeterminate parameters from the base field of characteristic 0.

Lemma \ref{case1} gives 16 distinct consequences obtained by increasing first the degree
and then the multiplicity:
\begin{align*}
( R \circ_1 B ) \circ_1 L &=
a \, L(L({\ast}){\ast}{\ast}) + b \, L(L({\ast}){\ast}){\ast} + c \, L({\ast}){\ast}L({\ast}),
\\
( R \circ_1 B ) \circ_2 L &=
a \, L({\ast}L({\ast}){\ast}) + b \, L({\ast}L({\ast})){\ast} + c \, {\ast}L({\ast})L({\ast}),
\\
( R \circ_1 B ) \circ_3 L &=
a \, L({\ast}{\ast}L({\ast})) + b \, L({\ast}{\ast})L({\ast}) + c \, {\ast}{\ast}L(L({\ast})),
\\
L \circ ( R \circ_1 B ) &=
a \, L(L({\ast}{\ast}{\ast})) + b \, L(L({\ast}{\ast}){\ast}) + c \, L({\ast}{\ast}L({\ast})),
\\
( R \circ_2 B ) \circ_1 L &=
a \, L(L({\ast}){\ast}{\ast}) + b \, L(L({\ast})){\ast}{\ast} + c \, L({\ast})L({\ast}{\ast}),
\\
( R \circ_2 B ) \circ_2 L &=
a \, L({\ast}L({\ast}){\ast}) + b \, L({\ast})L({\ast}){\ast} + c \, {\ast}L(L({\ast}){\ast}),
\\
( R \circ_2 B ) \circ_3 L &=
a \, L({\ast}{\ast}L({\ast})) + b \, L({\ast}){\ast}L({\ast}) + c \, {\ast}L({\ast}L({\ast})),
\\
L \circ ( R \circ_2 B ) &=
a \, L(L({\ast}{\ast}{\ast})) + b \, L(L({\ast}){\ast}{\ast}) + c \, L({\ast}L({\ast}{\ast})),
\\
( B \circ_1 R ) \circ_1 L &=
a \, L(L({\ast}){\ast}){\ast} + b \, L(L({\ast})){\ast}{\ast} + c \, L({\ast})L({\ast}){\ast},
\\
( B \circ_1 R ) \circ_2 L &=
a \, L({\ast}L({\ast})){\ast} + b \, L({\ast})L({\ast}){\ast} + c \, {\ast}L(L({\ast})){\ast},
\\
( B \circ_1 R ) \circ_3 L &=
a \, L({\ast}{\ast})L({\ast}) + b \, L({\ast}){\ast}L({\ast}) + c \, {\ast}L({\ast})L({\ast}),
\\
L \circ ( B \circ_1 R ) &=
a \, L(L({\ast}{\ast}){\ast}) + b \, L(L({\ast}){\ast}{\ast}) + c \, L({\ast}L({\ast}){\ast}),
\\
( B \circ_2 R ) \circ_1 L &=
a \, L({\ast})L({\ast}{\ast}) + b \, L({\ast})L({\ast}){\ast} + c \, L({\ast}){\ast}L({\ast}),
\\
( B \circ_2 R ) \circ_2 L &=
a \, {\ast}L(L({\ast}){\ast}) + b \, {\ast}L(L({\ast})){\ast} + c \, {\ast}L({\ast})L({\ast}),
\\
( B \circ_2 R ) \circ_3 L &=
a \, {\ast}L({\ast}L({\ast})) + b \, {\ast}L({\ast})L({\ast}) + c \, {\ast}{\ast}L(L({\ast})),
\\
L \circ ( B \circ_2 R ) &=
a \, L({\ast}L({\ast}{\ast})) + b \, L({\ast}L({\ast}){\ast}) + c \, L({\ast}{\ast}L({\ast})).
\end{align*}
Lemma \ref{case2} gives 12 distinct consequences obtained by increasing first the multiplicity
and then the degree, but 8 have already been obtained from Lemma \ref{case1}:
\begin{align*}
( R \circ_1 L ) \circ_1 B &=
a \, L(L({\ast}{\ast}){\ast}) + b \, L(L({\ast}{\ast})){\ast} + c \, L({\ast}{\ast})L({\ast}),
\\
( R \circ_1 L ) \circ_2 B &=
a \, L(L({\ast}){\ast}{\ast}) + b \, L(L({\ast})){\ast}{\ast} + c \, L({\ast})L({\ast}{\ast}) =
( R \circ_2 B ) \circ_1 L,
\\
B \circ_1 ( R \circ_1 L ) &=
a \, L(L({\ast}){\ast}){\ast} + b \, L(L({\ast})){\ast}{\ast} + c \, L({\ast})L({\ast}){\ast} =
( B \circ_1 R ) \circ_1 L,
\\
B \circ_2 ( R \circ_1 L ) &=
a \, {\ast}L(L({\ast}){\ast}) + b \, {\ast}L(L({\ast})){\ast} + c \, {\ast}L({\ast})L({\ast}) =
( B \circ_2 R ) \circ_2 L,
\\
( R \circ_2 L ) \circ_1 B &=
a \, L({\ast}{\ast}L({\ast})) + b \, L({\ast}{\ast})L({\ast}) + c \, {\ast}{\ast}L(L({\ast})) =
( R \circ_1 B ) \circ_3 L,
\\
( R \circ_2 L ) \circ_2 B &=
a \, L({\ast}L({\ast}{\ast})) + b \, L({\ast})L({\ast}{\ast}) + c \, {\ast}L(L({\ast}{\ast})),
\\
B \circ_1 ( R \circ_2 L ) &=
a \, L({\ast}L({\ast})){\ast} + b \, L({\ast})L({\ast}){\ast} + c \, {\ast}L(L({\ast})){\ast} =
( B \circ_1 R ) \circ_2 L,
\\
B \circ_2 ( R \circ_2 L ) &=
a \, {\ast}L({\ast}L({\ast})) + b \, {\ast}L({\ast})L({\ast}) + c \, {\ast}{\ast}L(L({\ast})) =
( B \circ_2 R ) \circ_3 L,
\\
( L \circ R ) \circ_1 B &=
a \, L(L({\ast}{\ast}{\ast})) + b \, L(L({\ast}{\ast}){\ast}) + c \, L({\ast}{\ast}L({\ast})) =
L \circ ( R \circ_1 B ),
\\
( L \circ R ) \circ_2 B &=
a \, L(L({\ast}{\ast}{\ast})) + b \, L(L({\ast}){\ast}{\ast}) + c \, L({\ast}L({\ast}{\ast})) =
L \circ ( R \circ_2 B ),
\\
B \circ_1 ( L \circ R ) &=
a \, L(L({\ast}{\ast})){\ast} + b \, L(L({\ast}){\ast}){\ast} + c \, L({\ast}L({\ast})){\ast},
\\
B \circ_2 ( L \circ R ) &=
a \, {\ast}L(L({\ast}{\ast})) + b \, {\ast}L(L({\ast}){\ast}) + c \, {\ast}L({\ast}L({\ast})).
\end{align*}
Thus for $p = 2$ we have altogether 20 distinct consequences.

The monomials in the consequences belong to the ordered monomial basis
of $\mathcal{O}(3,2)$ given in Table \ref{basis32}.
The matrix of consequences $M(R)$ has size $28 \times 20$, but 8 rows are redundant;
removing these rows gives the $20 \times 20$ matrix (also called $M(R)$)
in Table \ref{conmat21} (with dot for zero).
Our goal is to understand how the rank of this matrix depends on the values of
the parameters $a, b, c$.

Without loss of generality, we may scale $R$ so that its first nonzero coefficient is 1.
Thus we may split the problem into three mutually exclusive cases.

\begin{table}[ht]
$
\begin{array}{rll@{\qquad\qquad}rll}
 1  &\;\;\;  ((()()()))  &\;\;\;  L(L({\ast}{\ast}{\ast}))  &
 2  &\;\;\;  ((()())())  &\;\;\;  L(L({\ast}{\ast}){\ast})  \\
 3  &\;\;\;  ((()()))()  &\;\;\;  L(L({\ast}{\ast})){\ast}  &
 4  &\;\;\;  ((())()())  &\;\;\;  L(L({\ast}){\ast}{\ast})  \\
 5  &\;\;\;  ((())())()  &\;\;\;  L(L({\ast}){\ast}){\ast}  &
 6  &\;\;\;  ((()))()()  &\;\;\;  L(L({\ast})){\ast}{\ast}  \\
 7  &\;\;\;  (()(()()))  &\;\;\;  L({\ast}L({\ast}{\ast}))  &
 8  &\;\;\;  (()(())())  &\;\;\;  L({\ast}L({\ast}){\ast})  \\
 9  &\;\;\;  (()(()))()  &\;\;\;  L({\ast}L({\ast})){\ast}  &
10  &\;\;\;  (()()(()))  &\;\;\;  L({\ast}{\ast}L({\ast}))  \\
11  &\;\;\;  (()())(())  &\;\;\;  L({\ast}{\ast})L({\ast})  &
12  &\;\;\;  (())(()())  &\;\;\;  L({\ast})L({\ast}{\ast})  \\
13  &\;\;\;  (())(())()  &\;\;\;  L({\ast})L({\ast}){\ast}  &
14  &\;\;\;  (())()(())  &\;\;\;  L({\ast}){\ast}L({\ast})  \\
15  &\;\;\;  ()((()()))  &\;\;\;  {\ast}L(L({\ast}{\ast}))  &
16  &\;\;\;  ()((())())  &\;\;\;  {\ast}L(L({\ast}){\ast})  \\
17  &\;\;\;  ()((()))()  &\;\;\;  {\ast}L(L({\ast})){\ast}  &
18  &\;\;\;  ()(()(()))  &\;\;\;  {\ast}L({\ast}L({\ast}))  \\
19  &\;\;\;  ()(())(())  &\;\;\;  {\ast}L({\ast})L({\ast})  &
20  &\;\;\;  ()()((()))  &\;\;\;  {\ast}{\ast}L(L({\ast}))
\end{array}
$
\bigskip
\caption{Ordered monomial basis of $\mathcal{O}(3,2)$}
\label{basis32}
\end{table}

\begin{table}[ht]
$\left[
\begin{array}{
c@{\quad}c@{\quad}c@{\quad}c@{\quad}c@{\quad}c@{\quad}c@{\quad}c@{\quad}c@{\quad}c@{\quad}c@{\quad}c@{\quad}c@{\quad}c@{\quad}
c@{\quad}c@{\quad}c@{\quad}c@{\quad}c@{\quad}c@{\quad}c@{\quad}c@{\quad}c@{\quad}c@{\quad}c@{\quad}c@{\quad}c@{\quad}c
}
. & . & . & a & b & . & . & . & . & . & . & . & . & c & . & . & . & . & . & . \\
. & . & . & . & . & . & . & a & b & . & . & . & . & . & . & . & . & . & c & . \\
. & . & . & . & . & . & . & . & . & a & b & . & . & . & . & . & . & . & . & c \\
a & b & . & . & . & . & . & . & . & c & . & . & . & . & . & . & . & . & . & . \\
. & . & . & a & . & b & . & . & . & . & . & c & . & . & . & . & . & . & . & . \\
. & . & . & . & . & . & . & a & . & . & . & . & b & . & . & c & . & . & . & . \\
. & . & . & . & . & . & . & . & . & a & . & . & . & b & . & . & . & c & . & . \\
a & . & . & b & . & . & c & . & . & . & . & . & . & . & . & . & . & . & . & . \\
. & . & . & . & a & b & . & . & . & . & . & . & c & . & . & . & . & . & . & . \\
. & . & . & . & . & . & . & . & a & . & . & . & b & . & . & . & c & . & . & . \\
. & . & . & . & . & . & . & . & . & . & a & . & . & b & . & . & . & . & c & . \\
. & a & . & b & . & . & . & c & . & . & . & . & . & . & . & . & . & . & . & . \\
. & . & . & . & . & . & . & . & . & . & . & a & b & c & . & . & . & . & . & . \\
. & . & . & . & . & . & . & . & . & . & . & . & . & . & . & a & b & . & c & . \\
. & . & . & . & . & . & . & . & . & . & . & . & . & . & . & . & . & a & b & c \\
. & . & . & . & . & . & a & b & . & c & . & . & . & . & . & . & . & . & . & . \\
. & a & b & . & . & . & . & . & . & . & c & . & . & . & . & . & . & . & . & . \\
. & . & . & . & . & . & a & . & . & . & . & b & . & . & c & . & . & . & . & . \\
. & . & a & . & b & . & . & . & c & . & . & . & . & . & . & . & . & . & . & . \\
. & . & . & . & . & . & . & . & . & . & . & . & . & . & a & b & . & c & . & .
\end{array}
\right]$
\bigskip
\caption{The matrix $M(R)$ of consequences for general $R \in \mathcal{O}(2,1)$}
\label{conmat21}
\end{table}


\subsection{Case 1: $a = 1$ and $b, c$ are free}

We substitute $a = 1$ into the matrix $M(R)$ of Table \ref{conmat21}.
We compute the partial Smith form of the resulting matrix
(see \cite[\S8.4]{BD-book} for a detailed discussion of the PSF).
We repeatedly use elementary row and column operations
to swap 1s onto the diagonal and then use these diagonal 1s to eliminate
the nonzero elements in the corresponding row and column.
We obtain
\[
M(R)
\xrightarrow{\quad\text{PSF}\quad}
\left[
\begin{array}{cc}
I_{14} &\!\! O_{14,6} \\
O_{6,14} &\!\! B
\end{array}
\right]
\]
where $I$ and $O$ denote the identity and zero matrices of the given sizes, and
\[
B =
\left[
\begin{array}{cccccc}
b^4{+}b^3 & b^3 c{+}b^2 c & . & {-}b^2 c{+}b c^2 & {-}b c^3{-}b c^2 & {-}c^4{-}c^3
\\
. & b^2{+}b & . & . & {-}c^2{-}c & .
\\
. & . & . & . & . & .
\\
. & {-}b^3{-}b^2 & . & . & b c^2{+}b c & .
\\
. & . & . & {-}b^2{-}b & . & c^2{+}c
\\
b^2{+}b & . & . & {-}c^2{-}c & . & .
\end{array}
\right]
\]
After performing a few elementary row operations on $B$,
and deleting the resulting zero rows, we obtain the matrix
\[
B' =
\left[
\begin{array}{cccccc}
b^2{+}b & . & . & -c^2{-}c & . & . \\
. & b^2{+}b & . & . & -c^2{-}c & . \\
. & . & . & b^2{+}b & . & -c^2{-}c
\end{array}
\right]
\]
Hence $M(R)$ has minimal rank 14 and maximal rank 17.
The rank is 14 if and only if $b, c \in \{ 0, -1 \}$;
otherwise the rank is 17.
Rank 14 occurs only for
\[
(a,b,c) = (1,0,0), \; (1,0,-1), \; (1,-1,0), \; (1,-1,-1).
\]
These values of the coefficients correspond to the operator identities
\[
L(xy) = 0,
\quad\;
L(xy) = xL(y),
\quad\;
L(xy) = L(x)y,
\quad\;
L(xy) = L(x)y + xL(y).
\]


\subsection{Case 2: $a = 0$, $b = 1$ and $c$ is free}

In this case we have only one indeterminate so we can compute the ordinary Smith form.
After deleting the resulting zero rows and zero columns, we obtain
\[
\left[
\begin{array}{cc}
I_{14} & O_{14,3} \\
O_{3,14} & B
\end{array}
\right]
\quad\quad\quad
B =
\left[
\begin{array}{ccc}
c & . & .   \\
. & c & .   \\
. & . & c^2
\end{array}
\right]
\]
Hence $M(R)$ has minimal rank 14 and maximal rank 17.
The rank is 14 if and only if $c = 0$, otherwise the rank is 17.
Rank 14 occurs only for $(a,b,c) = (0,1,0)$
which corresponds to the operator identity
$L(x)y = 0$.


\subsection{Case 3: $a = b = 0$ and $c = 1$}

In this case we have no indeterminates so we can compute the ordinary row canonical form.
After deleting the resulting zero rows and zero columns,
we obtain $I_{14}$.
Hence $M(R)$ has rank 14 and $(a,b,c) = (0,0,1)$ corresponds to the operator identity
$xL(y) = 0$.

The preceding calculations prove the following theorem,
which provides our new classification of operator identities of degree 2 and multiplicity 1.

\begin{theorem}
\label{theorem21}
Consider the (nonzero) operator identity
\[
R = a \, L(xy) + b \, L(x)y + c \, xL(y).
\]
The matrix of consequences $M(R)$ has rank 14 or 17.
Rank 14 occurs if and only if the values of the coefficients
(up to nonzero scalar multiples)
correspond to one of the following six operator identities:
\begin{align*}
&
L(xy) = 0,
\qquad
L(xy) = xL(y),
\qquad
L(xy) = L(x)y,
\\
&
L(xy) = L(x)y + xL(y),
\qquad
L(x)y = 0,
\qquad
xL(y) = 0.
\end{align*}
\end{theorem}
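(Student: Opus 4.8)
The plan is to reduce the theorem to a single finite rank computation over a polynomial ring, using the scaling freedom to break it into the three cases according to which coefficient is the first nonzero one. The first step is to assemble $M(R)$ explicitly. Starting from $R = a\,L(xy) + b\,L(x)y + c\,xL(y)$, I would apply all the partial-composition maps of Lemmas \ref{case1} and \ref{case2}, producing the $(p{+}2)(2p{+}3) = 28$ consequences (for $p = 2$), expand each in the ordered basis of $\mathcal{O}(3,2)$ from Table \ref{basis32}, and record the coefficient vectors as rows. Discarding the $8$ rows that merely duplicate earlier consequences yields the $20 \times 20$ matrix of Table \ref{conmat21}, whose entries are $a$, $b$, $c$, or $0$. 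The entire problem then becomes to describe the rank of this matrix as a function of $(a,b,c)$ over the base field $k$.

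Second, I would treat the generic case, Case 1 ($a = 1$). The key point is that the entry $a = 1$ furnishes many unit entries, so the partial Smith form applies: I repeatedly select a $1$ as pivot and clear its row and column by elementary operations over $k[b,c]$, using \emph{only} scalar pivots so that no polynomial denominators ever arise. I expect this to isolate an $I_{14}$ block together with a residual $6 \times 6$ block $B$ with polynomial entries, which a few further row operations collapse to the $3 \times 6$ matrix $B'$ whose only nonzero entries are $b^2{+}b = b(b{+}1)$ and $-(c^2{+}c) = -c(c{+}1)$. Reading off its rank: $B'$ has rank $3$ unless both $b(b{+}1) = 0$ and $c(c{+}1) = 0$, in which case $B'$ vanishes identically. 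Hence $M(R)$ has rank $14 + 3 = 17$ generically and rank $14$ exactly when $(b,c) \in \{0,-1\}^2$, producing the four identities $L(xy) = 0$, $L(xy) = xL(y)$, $L(xy) = L(x)y$, and $L(xy) = L(x)y + xL(y)$.

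Third, I would dispatch the two degenerate cases, which are easier because fewer parameters survive. In Case 2 ($a = 0$, $b = 1$) only $c$ remains, so the ordinary Smith form over $k[c]$ applies; I expect invariant factors $c$ and $c^2$ in the residual block, so the rank is $17$ unless $c = 0$, and rank $14$ singles out $L(x)y = 0$. In Case 3 ($a = b = 0$, $c = 1$) no parameters remain, row reduction over $k$ gives $I_{14}$, and the rank is identically $14$, corresponding to $xL(y) = 0$. Since the residual blocks in all three cases have rank $3$ or $0$ and nothing in between, the only attained ranks are $17$ and $14$; collecting the four, one, and one coefficient vectors from the three cases yields exactly the six listed identities (up to scalar), establishing both directions of the ``if and only if''.

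The hard part will be the honest verification of the partial Smith form in Case 1: one must confirm that a scalar pivot is genuinely available at each of the fourteen reduction steps, so that the elimination is valid over $k[b,c]$ and introduces no spurious denominators, and that the resulting Schur-complement block is exactly the $B$ displayed before its reduction to $B'$. This is a large but entirely mechanical Gaussian elimination; the conceptual content lies in recognizing that the submaximal-rank locus is governed solely by the two factored quadratics $b(b{+}1)$ and $c(c{+}1)$, whose common zero set $\{0,-1\}^2$ is precisely the rank-$14$ stratum.
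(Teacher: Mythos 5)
Your proposal follows essentially the same route as the paper: the same $20 \times 20$ matrix of consequences, the same three-case split on the first nonzero coefficient, the partial Smith form isolating $I_{14}$ with residual block $B$ reduced to the $3 \times 6$ matrix $B'$ governed by $b(b{+}1)$ and $c(c{+}1)$ in Case 1, the Smith form over $k[c]$ in Case 2 (where the paper's residual block is $\mathrm{diag}(c,c,c^2)$, a trivial discrepancy with your prediction that does not affect the rank analysis), and direct row reduction in Case 3. The approach and conclusions are correct and match the paper's proof.
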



\section{Degree 2, multiplicity 2}


\subsection{Constructing the matrix of consequences}

The general $R \in \mathcal{O}(2,2)$ has the form
\[
R
=
a L^2(xy) +
b L(L(x)y) +
c L^2(x)y +
d L(xL(y)) +
e L(x)L(y) +
f xL^2(y),
\]
where $a, b, c, d, e, f$ are indeterminate parameters from the base field of characteristic 0.
Without loss of generality, we may scale $R$ so that its first nonzero coefficient is 1,
giving six mutually exclusive cases.

The 20 distinct consequences of $R$ are formally the same (in terms of partial compositions)
as in \S\ref{construct21}.
But of course the expressions of the consequences as linear combinations of basis monomials
are different.
For example,
\begin{align*}
( R \circ_1 M ) \circ_1 L
&=
a L(L(L({\ast}){\ast}{\ast})) +
b L(L(L({\ast}){\ast}){\ast}) +
c L(L(L({\ast}){\ast})){\ast}
\\
&\quad
{} +
d L(L({\ast}){\ast}L({\ast})) +
e L(L({\ast}){\ast})L({\ast}) +
f L({\ast}){\ast}L(L({\ast})),
\\
( R \circ_1 L ) \circ_1 M
&=
a L(L(L({\ast}{\ast}){\ast})) +
b L(L(L({\ast}{\ast})){\ast}) +
c L(L(L({\ast}{\ast}))){\ast}
\\
&\quad
{} +
d L(L({\ast}{\ast})L({\ast})) +
e L(L({\ast}{\ast}))L({\ast}) +
f L({\ast}{\ast})L(L({\ast})).
\end{align*}
See Table \ref{basis33} for the ordered monomial basis of $\mathcal{O}(3,3)$.


\begin{table}[ht]
$
\begin{array}{rll@{\qquad\qquad}rll}
 1  &\;\;\;  (((()()())))  &\;\;\;  L(L(L({\ast}{\ast}{\ast})))  &
 2  &\;\;\;  (((()())()))  &\;\;\;  L(L(L({\ast}{\ast}){\ast}))  \\
 3  &\;\;\;  (((()()))())  &\;\;\;  L(L(L({\ast}{\ast})){\ast})  &
 4  &\;\;\;  (((()())))()  &\;\;\;  L(L(L({\ast}{\ast}))){\ast}  \\
 5  &\;\;\;  (((())()()))  &\;\;\;  L(L(L({\ast}){\ast}{\ast}))  &
 6  &\;\;\;  (((())())())  &\;\;\;  L(L(L({\ast}){\ast}){\ast})  \\
 7  &\;\;\;  (((())()))()  &\;\;\;  L(L(L({\ast}){\ast})){\ast}  &
 8  &\;\;\;  (((()))()())  &\;\;\;  L(L(L({\ast})){\ast}{\ast})  \\
 9  &\;\;\;  (((()))())()  &\;\;\;  L(L(L({\ast})){\ast}){\ast}  &
10  &\;\;\;  (((())))()()  &\;\;\;  L(L(L({\ast}))){\ast}{\ast}  \\
11  &\;\;\;  ((()(()())))  &\;\;\;  L(L({\ast}L({\ast}{\ast})))  &
12  &\;\;\;  ((()(())()))  &\;\;\;  L(L({\ast}L({\ast}){\ast}))  \\
13  &\;\;\;  ((()(()))())  &\;\;\;  L(L({\ast}L({\ast})){\ast})  &
14  &\;\;\;  ((()(())))()  &\;\;\;  L(L({\ast}L({\ast}))){\ast}  \\
15  &\;\;\;  ((()()(())))  &\;\;\;  L(L({\ast}{\ast}L({\ast})))  &
16  &\;\;\;  ((()())(()))  &\;\;\;  L(L({\ast}{\ast})L({\ast}))  \\
17  &\;\;\;  ((()()))(())  &\;\;\;  L(L({\ast}{\ast}))L({\ast})  &
18  &\;\;\;  ((())(()()))  &\;\;\;  L(L({\ast})L({\ast}{\ast}))  \\
19  &\;\;\;  ((())(())())  &\;\;\;  L(L({\ast})L({\ast}){\ast})  &
20  &\;\;\;  ((())(()))()  &\;\;\;  L(L({\ast})L({\ast})){\ast}  \\
21  &\;\;\;  ((())()(()))  &\;\;\;  L(L({\ast}){\ast}L({\ast}))  &
22  &\;\;\;  ((())())(())  &\;\;\;  L(L({\ast}){\ast})L({\ast})  \\
23  &\;\;\;  ((()))(()())  &\;\;\;  L(L({\ast}))L({\ast}{\ast})  &
24  &\;\;\;  ((()))(())()  &\;\;\;  L(L({\ast}))L({\ast}){\ast}  \\
25  &\;\;\;  ((()))()(())  &\;\;\;  L(L({\ast})){\ast}L({\ast})  &
26  &\;\;\;  (()((()())))  &\;\;\;  L({\ast}L(L({\ast}{\ast})))  \\
27  &\;\;\;  (()((())()))  &\;\;\;  L({\ast}L(L({\ast}){\ast}))  &
28  &\;\;\;  (()((()))())  &\;\;\;  L({\ast}L(L({\ast})){\ast})  \\
29  &\;\;\;  (()((())))()  &\;\;\;  L({\ast}L(L({\ast}))){\ast}  &
30  &\;\;\;  (()(()(())))  &\;\;\;  L({\ast}L({\ast}L({\ast})))  \\
31  &\;\;\;  (()(())(()))  &\;\;\;  L({\ast}L({\ast})L({\ast}))  &
32  &\;\;\;  (()(()))(())  &\;\;\;  L({\ast}L({\ast}))L({\ast})  \\
33  &\;\;\;  (()()((())))  &\;\;\;  L({\ast}{\ast}L(L({\ast})))  &
34  &\;\;\;  (()())((()))  &\;\;\;  L({\ast}{\ast})L(L({\ast}))  \\
35  &\;\;\;  (())((()()))  &\;\;\;  L({\ast})L(L({\ast}{\ast}))  &
36  &\;\;\;  (())((())())  &\;\;\;  L({\ast})L(L({\ast}){\ast})  \\
37  &\;\;\;  (())((()))()  &\;\;\;  L({\ast})L(L({\ast})){\ast}  &
38  &\;\;\;  (())(()(()))  &\;\;\;  L({\ast})L({\ast}L({\ast}))  \\
39  &\;\;\;  (())(())(())  &\;\;\;  L({\ast})L({\ast})L({\ast})  &
40  &\;\;\;  (())()((()))  &\;\;\;  L({\ast}){\ast}L(L({\ast}))  \\
41  &\;\;\;  ()(((()())))  &\;\;\;  {\ast}L(L(L({\ast}{\ast})))  &
42  &\;\;\;  ()(((())()))  &\;\;\;  {\ast}L(L(L({\ast}){\ast}))  \\
43  &\;\;\;  ()(((()))())  &\;\;\;  {\ast}L(L(L({\ast})){\ast})  &
44  &\;\;\;  ()(((())))()  &\;\;\;  {\ast}L(L(L({\ast}))){\ast}  \\
45  &\;\;\;  ()((()(())))  &\;\;\;  {\ast}L(L({\ast}L({\ast})))  &
46  &\;\;\;  ()((())(()))  &\;\;\;  {\ast}L(L({\ast})L({\ast}))  \\
47  &\;\;\;  ()((()))(())  &\;\;\;  {\ast}L(L({\ast}))L({\ast})  &
48  &\;\;\;  ()(()((())))  &\;\;\;  {\ast}L({\ast}L(L({\ast})))  \\
49  &\;\;\;  ()(())((()))  &\;\;\;  {\ast}L({\ast})L(L({\ast}))  &
50  &\;\;\;  ()()(((())))  &\;\;\;  {\ast}{\ast}L(L(L({\ast})))
\end{array}
$
\bigskip
\caption{Ordered monomial basis of $\mathcal{O}(3,3)$}
\label{basis33}
\end{table}


\begin{table}[ht]
$\left[
\begin{array}{
c@{\quad}c@{\quad}c@{\quad}c@{\quad}c@{\quad}c@{\quad}c@{\quad}c@{\quad}c@{\quad}c@{\quad}c@{\quad}c@{\quad}c@{\quad}c@{\quad}
c@{\quad}c@{\quad}c@{\quad}c@{\quad}c@{\quad}c@{\quad}c@{\quad}c@{\quad}c@{\quad}c@{\quad}c@{\quad}c@{\quad}c@{\quad}c
}
. & . & . & a & . & . & . & a & . & . & . & . & . & . & . & . & . & . & . & . \\[-2pt]
. & . & . & b & . & . & . & . & . & . & . & . & . & . & . & . & a & . & . & . \\[-2pt]
. & . & . & c & . & . & . & . & . & . & . & a & . & . & . & . & b & . & . & . \\[-2pt]
. & . & . & . & . & . & . & . & . & . & . & . & . & . & . & . & c & . & a & . \\[-2pt]
a & . & . & . & a & . & . & b & . & . & . & . & . & . & . & . & . & . & . & . \\[-2pt]
b & . & . & . & . & . & . & . & . & . & . & b & . & . & . & . & . & . & . & . \\[-2pt]
c & . & . & . & . & . & . & . & a & . & . & . & . & . & . & . & . & . & b & . \\[-2pt]
. & . & . & . & b & . & . & c & . & . & . & c & . & . & . & . & . & . & . & . \\[-2pt]
. & . & . & . & . & . & . & . & b & . & . & . & . & . & . & . & . & . & c & . \\[-2pt]
. & . & . & . & c & . & . & . & c & . & . & . & . & . & . & . & . & . & . & . \\[-2pt]
. & . & . & . & . & . & . & d & . & . & . & . & . & . & . & . & . & a & . & . \\[-2pt]
. & a & . & . & . & a & . & . & . & . & . & . & . & . & . & . & . & . & . & . \\[-2pt]
. & b & . & . & . & . & . & . & . & . & . & d & . & . & . & . & . & . & . & . \\[-2pt]
. & c & . & . & . & . & . & . & . & a & . & . & . & . & . & . & . & . & d & . \\[-2pt]
. & . & a & d & . & . & a & . & . & . & . & . & . & . & . & . & . & . & . & . \\[-2pt]
. & . & b & e & . & . & . & . & . & . & . & . & . & . & . & . & d & . & . & . \\[-2pt]
. & . & c & . & . & . & . & . & . & . & a & . & . & . & . & . & e & . & . & . \\[-2pt]
. & . & . & . & d & . & . & e & . & . & . & . & . & . & . & . & . & b & . & . \\[-2pt]
. & . & . & . & . & b & . & . & . & . & . & e & . & . & . & . & . & . & . & . \\[-2pt]
. & . & . & . & . & . & . & . & d & b & . & . & . & . & . & . & . & . & e & . \\[-2pt]
d & . & . & . & . & . & b & . & . & . & . & . & . & . & . & . & . & . & . & . \\[-2pt]
e & . & . & . & . & . & . & . & . & . & b & . & . & . & . & . & . & . & . & . \\[-2pt]
. & . & . & . & e & . & . & . & . & . & . & . & . & . & . & . & . & c & . & . \\[-2pt]
. & . & . & . & . & c & . & . & e & c & . & . & . & . & . & . & . & . & . & . \\[-2pt]
. & . & . & . & . & . & c & . & . & . & c & . & . & . & . & . & . & . & . & . \\[-2pt]
. & . & . & . & . & . & . & f & . & . & . & . & . & . & . & a & . & d & . & . \\[-2pt]
. & . & . & . & . & d & . & . & . & . & . & . & . & . & . & b & . & . & . & . \\[-2pt]
. & . & . & . & . & . & . & . & . & . & . & f & . & . & . & c & . & . & . & . \\[-2pt]
. & . & . & . & . & . & . & . & . & d & . & . & . & . & . & . & . & . & f & . \\[-2pt]
. & . & . & . & . & . & d & . & . & . & . & . & . & . & . & d & . & . & . & . \\[-2pt]
. & d & . & . & . & . & . & . & . & . & . & . & . & . & . & e & . & . & . & . \\[-2pt]
. & e & . & . & . & . & . & . & . & . & d & . & . & . & . & . & . & . & . & . \\[-2pt]
. & . & d & f & . & . & . & . & . & . & . & . & . & . & . & f & . & . & . & . \\[-2pt]
. & . & e & . & . & . & . & . & . & . & . & . & . & . & . & . & f & . & . & . \\[-2pt]
. & . & . & . & f & . & . & . & . & . & . & . & a & . & . & . & . & e & . & . \\[-2pt]
. & . & . & . & . & e & . & . & . & . & . & . & b & . & . & . & . & . & . & . \\[-2pt]
. & . & . & . & . & . & . & . & f & e & . & . & c & . & . & . & . & . & . & . \\[-2pt]
. & . & . & . & . & . & e & . & . & . & . & . & d & . & . & . & . & . & . & . \\[-2pt]
. & . & . & . & . & . & . & . & . & . & e & . & e & . & . & . & . & . & . & . \\[-2pt]
f & . & . & . & . & . & . & . & . & . & . & . & f & . & . & . & . & . & . & . \\[-2pt]
. & . & . & . & . & . & . & . & . & . & . & . & . & . & . & . & . & f & . & a \\[-2pt]
. & . & . & . & . & f & . & . & . & . & . & . & . & a & . & . & . & . & . & b \\[-2pt]
. & . & . & . & . & . & . & . & . & . & . & . & . & b & . & . & . & . & . & c \\[-2pt]
. & . & . & . & . & . & . & . & . & f & . & . & . & c & . & . & . & . & . & . \\[-2pt]
. & . & . & . & . & . & f & . & . & . & . & . & . & . & a & . & . & . & . & d \\[-2pt]
. & . & . & . & . & . & . & . & . & . & . & . & . & d & b & . & . & . & . & e \\[-2pt]
. & . & . & . & . & . & . & . & . & . & f & . & . & e & c & . & . & . & . & . \\[-2pt]
. & . & . & . & . & . & . & . & . & . & . & . & . & . & d & . & . & . & . & f \\[-2pt]
. & f & . & . & . & . & . & . & . & . & . & . & . & f & e & . & . & . & . & . \\[-2pt]
. & . & f & . & . & . & . & . & . & . & . & . & . & . & f & . & . & . & . & .
\end{array}
\right]$
\bigskip
\caption{The transpose matrix $M(R)^t$ for general $R \in \mathcal{O}(2,2)$}
\label{conmat22}
\end{table}


The transpose $M(R)^t$ of the $20 \times 50$ matrix $M(R)$ appears in
Table \ref{conmat22}.
Our goal is to understand how the rank of this matrix depends on the values of
the parameters $a, b, c, d, e, f$.
Our results depend on computational commutative algebra
over the polynomial ring $\mathbb{F}[a,b,c,d,e,f]$.
In particular, we compute Gr\"obner bases for determinantal ideals
of $M(R)^t$.
We use the degree-lexicographical monomial order
$a \prec b \prec c \prec d \prec e \prec f$.
Our computations depend on the following result from linear algebra,
and its generalization to matrices over polynomial rings.

\begin{theorem}
Let $M$ be an $m \times n$ matrix with coefficients in the field $\mathbb{F}$.
For $0 \le r \le \min(m,n)$ we have $\mathrm{rank}(M) = r$ if and only if
\begin{enumerate}
\item[(i)]
some $r \times r$ submatrix of $M$ has nonzero determinant, and
\item[(ii)]
every $(r{+}1) \times (r{+}1)$ submatrix of $M$ has determinant 0.
\end{enumerate}
(If $r = \min(m,n)$ then condition (ii) is vacuously true.)
\end{theorem}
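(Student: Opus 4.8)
The plan is to recognize that conditions (i) and (ii) together assert exactly that the largest size of a nonvanishing minor of $M$ equals $r$: condition (i) says this ``determinantal rank'' is at least $r$, while condition (ii) says it is at most $r$. Hence the theorem is equivalent to the classical fact that the determinantal rank of a matrix over a field coincides with its ordinary rank (the dimension of the row space, equivalently the column space). I would therefore reduce the entire statement to proving $\mathrm{rank}(M) = \delta(M)$, where $\delta(M)$ denotes the largest $r$ for which some $r \times r$ submatrix is nonsingular.

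The two resulting inequalities rest on two small lemmas. First, $\delta(M) \le \mathrm{rank}(M)$: if the submatrix $S$ on rows $I$ and columns $J$ (with $|I| = |J| = r$) satisfies $\det S \ne 0$, then the $r$ rows of $M$ indexed by $I$ are linearly independent, because any dependence $\sum_{i \in I} c_i \, \mathrm{row}_i(M) = 0$, restricted to the columns in $J$, yields a dependence among the rows of $S$, forcing all $c_i = 0$ by nonsingularity of $S$; hence $\mathrm{rank}(M) \ge r$. Second, $\mathrm{rank}(M) \le \delta(M)$: if $\mathrm{rank}(M) = r$, I choose $r$ linearly independent rows, forming an $r \times n$ submatrix $M'$, and since row rank equals column rank, $M'$ has $r$ independent columns, so the $r \times r$ matrix they span is nonsingular and $\delta(M) \ge r$.

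With these lemmas the theorem is immediate in both directions. For the forward implication, assuming $\mathrm{rank}(M) = r$, the second lemma produces an $r \times r$ nonsingular submatrix, which is (i), while the contrapositive of the first lemma shows every $(r{+}1) \times (r{+}1)$ submatrix is singular, since a nonsingular one would force $\mathrm{rank}(M) \ge r{+}1$, which is (ii). For the converse, (i) together with the first lemma gives $\mathrm{rank}(M) \ge r$, while (ii) together with the second lemma applied at size $r{+}1$ rules out $\mathrm{rank}(M) \ge r{+}1$, so $\mathrm{rank}(M) = r$.

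There is no serious obstacle here: the result is a standard theorem of linear algebra, and the only point requiring care is the appeal to the equality of row rank and column rank in the second lemma, which should be invoked rather than rederived to avoid circularity. The parenthetical remark that (ii) is vacuous when $r = \min(m,n)$ needs no argument, since there are then no $(r{+}1) \times (r{+}1)$ submatrices. The genuine interest of the statement for this paper lies not in its proof but in its generalization to matrices over the polynomial ring $\mathbb{F}[a,b,c,d,e,f]$, where ``nonzero determinant'' becomes ``nonzero polynomial'' and the determinantal ideals whose Gr\"obner bases are computed in the sequel encode precisely the loci of coefficient values where the rank drops.
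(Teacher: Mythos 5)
Your proof is correct: the reduction of the statement to the equality of rank with determinantal rank, the two inequalities (rows passing through a nonsingular $r \times r$ minor are linearly independent; conversely $r$ independent rows yield, via row rank equals column rank, a nonsingular $r \times r$ submatrix), and the application of the second lemma at size $r{+}1$ to get the converse direction are all sound, and your remark that (ii) is vacuous when $r = \min(m,n)$ indeed needs no argument. There is nothing in the paper to compare against: the theorem is stated there without proof, as a classical fact of linear algebra whose only role is to motivate the determinantal ideals $I(M,r)$, their zero sets $Z(M,r)$, and the specialization proposition that follow, so your write-up simply supplies the standard textbook argument the authors took for granted, and your closing observation about the polynomial-ring generalization matches exactly how the result is used in the sequel.
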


\begin{definition}
Let $M$ be an $m \times n$ matrix with coefficients
in the polynomial ring $P = \mathbb{F}[x_1,\dots,x_k]$.
Let $I(M,r)$ be the ideal in $P$ generated by the determinants
of all $r \times r$ submatrices of $M$;
this is the $r$-th \emph{determinantal ideal} of $M$.
Let $Z(M,r) \subseteq \mathbb{F}^k$ be the zero set of $I(M,r)$;
that is, the $k$-tuples for which every polynomial in $I(M,r)$ evaluates to 0.
\end{definition}

\begin{proposition}
For $0 \le r < s \le \min(m,n)$ we have $I(M,r) \supseteq I(M,s)$
and hence $Z(M,r) \subseteq Z(M,s)$.
For $X = (a_1,\dots,a_k) \in \mathbb{F}^k$ let $M_X$ denote the matrix with entries in $\mathbb{F}$
obtained by substituting $x_1 = a_1, \dots, x_k = a_k$ in $M$.
Then $M_X$ has rank $r$ if and only if $X \in Z(M,r{+}1)$ but $X \notin Z(M,r)$.
\end{proposition}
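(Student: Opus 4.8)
The plan is to prove the three assertions in turn: the ideal containment $I(M,r) \supseteq I(M,s)$, the resulting reverse containment of zero sets, and finally the rank characterization, which will drop out by matching the two numerical conditions of the preceding theorem against membership in the zero sets.

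First I would establish $I(M,s) \subseteq I(M,r)$ for $r < s$. By transitivity of inclusion it suffices to treat the single step $s = r{+}1$ and then iterate downward. The key tool is the Laplace (cofactor) expansion of a determinant along a fixed row: any $s \times s$ minor of $M$ expands as a $P$-linear combination of $(s{-}1)\times(s{-}1)$ minors, the coefficients being $\pm 1$ times entries of $M$. Each such $(s{-}1)\times(s{-}1)$ minor is a generator of $I(M,s{-}1)$, so every generator of $I(M,s)$ lies in $I(M,s{-}1)$, giving $I(M,s) \subseteq I(M,s{-}1)$. (The boundary value $r = 0$ is immediate under the convention $I(M,0) = P$, the determinant of the empty matrix being $1$.) The second assertion is then the standard order-reversal for zero sets: if $J \subseteq I$, then any point annihilating all of $I$ in particular annihilates all of $J$, so $Z(I) \subseteq Z(J)$; applied to $I(M,s) \subseteq I(M,r)$ this yields $Z(M,r) \subseteq Z(M,s)$.

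For the rank characterization I would invoke the preceding theorem, which says $\operatorname{rank}(M_X) = r$ exactly when (i) some $r \times r$ submatrix of $M_X$ has nonzero determinant and (ii) every $(r{+}1)\times(r{+}1)$ submatrix of $M_X$ has determinant $0$. The bridge to the determinantal ideals is that evaluation at $X$ is a ring homomorphism $P \to \mathbb{F}$, and therefore commutes with the polynomial expression for a determinant: the value at $X$ of an $r\times r$ minor of $M$ is precisely the corresponding minor of $M_X$. Hence condition (ii) is exactly the statement that every generator of $I(M,r{+}1)$ vanishes at $X$, i.e. $X \in Z(M,r{+}1)$; and, since an ideal vanishes at $X$ if and only if all its generators do, condition (i) — the existence of a nonvanishing $r\times r$ minor — is precisely the negation $X \notin Z(M,r)$. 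Combining (i) and (ii) gives $\operatorname{rank}(M_X) = r$ if and only if $X \in Z(M,r{+}1)$ but $X \notin Z(M,r)$, as claimed.

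The two points that require care — and the only real obstacles, both bookkeeping rather than conceptual — are the following. One must confirm that ``some generator is nonzero at $X$'' is genuinely equivalent to ``the ideal does not vanish at $X$'', which holds because every element of $I(M,r)$ is a $P$-combination of the $r\times r$ minors and so vanishes at $X$ whenever they all do. One must also handle the extreme value $r = \min(m,n)$, where there are no $(r{+}1)\times(r{+}1)$ submatrices: condition (ii) is then vacuous, and correspondingly $I(M,r{+}1)$ is the zero ideal with $Z(M,r{+}1) = \mathbb{F}^k$, so that the characterization reduces correctly to $X \notin Z(M,r)$ alone. With these conventions fixed, the equivalence holds across the full range of $r$.
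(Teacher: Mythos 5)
Your proposal is correct, and it supplies exactly the standard argument the paper leaves implicit: the paper states this proposition without proof, having set it up so that it follows from the preceding rank theorem together with the Laplace expansion (giving $I(M,s)\subseteq I(M,r)$) and the fact that evaluation at $X$ is a ring homomorphism carrying minors of $M$ to minors of $M_X$. Your handling of the boundary cases $r=0$ and $r=\min(m,n)$, and of the equivalence between vanishing of generators and vanishing of the ideal, is precisely the bookkeeping needed, so nothing is missing.
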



\subsection{Case 1: $a = 1$ and $b, c, d, e, f$ are free}

\subsubsection{Computing the lower right block}

We substitute $a = 1$ in the matrix $M(R)^t$ of Table \ref{conmat22}
and compute the partial Smith form:
\[
\left[
\begin{matrix}
I_{16} & O_{16,4} \\
O_{34,16} & B
\end{matrix}
\right]
\]
The lower right block $B$ has size $34 \times 4$; see Table \ref{case1matrixB}.
From this it is clear that $M(R)$ has minimal rank 16,
and that $\text{rank} \, M(R) = 16 + r$ where $r = \text{rank} \, B$.
Thus $B$ is a matrix over $\mathbb{F}[b,c,d,e,f]$
and we need to determine how the rank of $B$ depends on the parameters $b, c, d, e, f$.

\begin{table}[ht]
$\left[
\begin{array}{c@{\qquad}c@{\qquad}c@{\qquad}c}
b & . & . & -b^2 c+c^2+c \\
d & . & . & -d b+e \\
. & b & . & -b^2 e+e c \\
c d & c b & . & 2 b^2 c d+d c b-e c b \\
-d & . & b & -d b \\
-e & . & c b & -b^2 e-d c b-e b \\
e & . & . & -c d \\
e c & c^2+c & . & b^2 c e+b c^2 d+e c b \\
. & . & c^2+c & -e c b-c^2 d \\
. & -b & . & -b^2 d+c d \\
. & d & . & d^2 b-f b \\
. & . & . & -f b^2+d^2 c \\
. & c d & . & b c d^2-f c b \\
. & . & d & d^3-f d \\
. & -d & . & e d^2-f e \\
. & -e & c d & -e b d-d^2 c \\
. & . & -d & f d^2+d^2-f^2-f \\
. & . & -e & f b+e d \\
c b & . & . & b^3 c+b^2 c-c^2 b \\
-f b & e & . & e b d \\
. & e c & . & b^2 c f+b c d e+f c b+e c d \\
-f d & . & e & e d^2 \\
-f e & . & e c & -b e^2-e c d+e^2 d \\
-f^2-f & . & . & f e d-f b \\
. & . & -b & 2 d b-e \\
c^2+c & . & . & c^2 b^2+c^2 b \\
. & -f b & . & -b^2 d f+f c d \\
. & . & . & . \\
-b & . & . & -b^3-b^2+c b \\
. & -f d & -f b & -2 b d^2 f+f e d \\
. & -f e & . & -b d e f-c d^2 f-b e f-f c d \\
. & . & -f d & -d^3 f+d f^2 \\
. & -f^2-f & -f e & -b d f^2-d^2 e f \\
. & . & -f^2-f & -f^2 d^2+f d
\end{array}
\right]$
\bigskip
\caption{Case 1: lower right block $B$}
\label{case1matrixB}
\end{table}

\subsubsection{The first determinantal ideal}

Clearly $I(B,1)$ is the ideal generated by the 47 distinct nonzero entries of $B$.
The Gr\"obner basis is
$b$, $d$, $e$, $c(c+1)$, $f(f+1)$.
The zero set (including the assumption $a = 1$)
consists of the rows of the array
\[
\begin{array}{rrrrrr}
a & b &  c & d & e &  f \\ \midrule
1 & 0 & -1 & 0 & 0 & -1 \\
1 & 0 & -1 & 0 & 0 &  0 \\
1 & 0 &  0 & 0 & 0 & -1 \\
1 & 0 &  0 & 0 & 0 &  0
\end{array}
\]
These are the values of the parameters which make $B = O$,
and hence for these values $M(R)$ has rank 16.

\begin{proposition}
\label{case1rank16}
If $a = 1$ then $M(R)$ has rank 16 if and only if
the corresponding operator identity is one of the following:
\begin{align*}
&
L^2(xy) = L^2(x)y + xL^2(y),
\qquad
L^2(xy) = L^2(x)y,
\\
&
L^2(xy) = xL^2(y),
\qquad
L^2(xy) = 0.
\end{align*}
\end{proposition}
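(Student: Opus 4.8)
The plan is to turn the rank condition into a question about a variety and then read off the solutions. The partial Smith form computed above gives, for $a = 1$, the identity $\mathrm{rank}\, M(R) = 16 + \mathrm{rank}\, B$, where $B$ is the $34 \times 4$ block over $\mathbb{F}[b,c,d,e,f]$ displayed in Table \ref{case1matrixB}; since the row and column operations producing the $I_{16}$ corner use only constant pivots, this relation persists after any specialization $X = (1,b,c,d,e,f)$ of the parameters. Consequently $M(R)$ has rank $16$ if and only if $\mathrm{rank}\, B_X = 0$, that is, if and only if $B_X$ is the zero matrix.

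First I would observe that $B_X = O$ holds exactly when every entry of $B$ vanishes at $X$, which by the preceding proposition on determinantal ideals is the condition $X \in Z(B,1)$, the zero set of the first determinantal ideal $I(B,1)$ generated by the $47$ distinct nonzero entries of $B$. The task thus reduces to describing the common zeros of these $47$ polynomials, and the computational heart of the argument is the Gr\"obner basis calculation which (in the degree-lexicographic order $a \prec b \prec c \prec d \prec e \prec f$) collapses $I(B,1)$ to the simple generators $b$, $d$, $e$, $c(c+1)$, $f(f+1)$. I expect this collapse to be the main obstacle: the generators $b, d, e$ and, up to sign, $c^2+c$ and $f^2+f$ appear literally among the entries of $B$, so they lie in $I(B,1)$ and immediately force necessity; the genuine work is the converse, namely verifying that every one of the remaining, more complicated entries of $B$ lies in the ideal $(b,d,e,c(c+1),f(f+1))$ — equivalently, that each such entry vanishes identically once one imposes $b = d = e = 0$ together with $c^2+c = f^2+f = 0$. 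This ideal-membership check is precisely what the Gr\"obner basis certifies.

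Finally I would solve the reduced system. The vanishing of $b$, $d$, $e$, $c(c+1)$, $f(f+1)$ forces $b = d = e = 0$, $c \in \{0,-1\}$ and $f \in \{0,-1\}$, giving exactly the four parameter tuples
\[
(1,0,-1,0,0,-1), \quad (1,0,-1,0,0,0), \quad (1,0,0,0,0,-1), \quad (1,0,0,0,0,0)
\]
recorded in the zero-set array above. Substituting each into $R = a\, L^2(xy) + b\, L(L(x)y) + c\, L^2(x)y + d\, L(xL(y)) + e\, L(x)L(y) + f\, xL^2(y)$ and setting $R = 0$ yields, respectively, $L^2(xy) = L^2(x)y + xL^2(y)$, $L^2(xy) = L^2(x)y$, $L^2(xy) = xL^2(y)$, and $L^2(xy) = 0$. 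These are exactly the four identities in the statement, establishing both implications and completing the proof.
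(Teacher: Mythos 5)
Your proposal is correct and follows essentially the same route as the paper: specialize $a=1$, use the partial Smith form to reduce to the $34\times 4$ block $B$ with $\mathrm{rank}\,M(R)=16+\mathrm{rank}\,B$, identify rank $16$ with the vanishing of $B$, i.e.\ membership in $Z(B,1)$, invoke the Gr\"obner basis $b$, $d$, $e$, $c(c+1)$, $f(f+1)$ of the first determinantal ideal, and solve to get the four parameter tuples and the four identities. Your added remarks --- that the specialization argument is legitimate because the pivots producing the $I_{16}$ corner are constants, and that $b$, $d$, $e$, $c^2{+}c$, $-f^2{-}f$ occur literally as entries of $B$ so that the Gr\"obner computation is really certifying the reverse ideal membership --- are correct elaborations of steps the paper leaves implicit.
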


\subsubsection{The second determinantal ideal}

At this point we begin to rely heavily on results obtained using the
computer algebra system Maple.
The matrix $B$ has $\binom{34}{2}\binom{4}{2} = 3366$ subdeterminants of size 2,
consisting of 817 distinct polynomials of degrees 2 to 6.
The Gr\"obner basis of $I(B,2)$ has size 15,
but its zero set is the same as that of $I(B,1)$:
these two ideals have the same radical.
Hence there are no values of the parameters for which $M(R)$ has rank 17.

\subsubsection{The third determinantal ideal}

The matrix $B$ has $\binom{34}{3}\binom{4}{3} = 23936$ subdeterminants of size 3,
consisting of 6921 distinct polynomials of degrees 3 to 8.
The Gr\"obner basis of $I(B,3)$ has size 35,
but its zero set is the same as that of $I(B,2)$.
Hence there are no values of the parameters for which $M(R)$ has rank 18.

\subsubsection{The fourth determinantal ideal}
\label{case1fourth}

The matrix $B$ has $\binom{34}{4}\binom{4}{4} = 46376$ subdeterminants of size 4,
consisting of 20363 distinct polynomials of degrees 5 to 10.
The Gr\"obner basis of $I(B,4)$ consists of 93 polynomials;
see Table \ref{gb93} at the end of this paper.
The first element of the Gr\"obner basis is $b e^2 (d-b)$.
Hence we may split the computation of the zero set into three subcases:
$b = 0$, $e = 0$ and $d = b$.

\textsf{Subcase $b = 0$}:
We substitute $b = 0$ into the Gr\"obner basis and obtain a set of 40 nonzero polynomials.
Using Maple to solve this system of polynomial equations
(and including the assumptions $a = 1$, $b = 0$)
we obtain five solutions:
\[
\begin{array}{rrrrrr}
a & b &  c & d & e &  f \\ \midrule
1 & 0 & -1 & 0 & 0 & -1 \\
1 & 0 & -1 & 0 & 0 &  0 \\
1 & 0 &  0 & 0 & 0 &  0 \\
1 & 0 &  0 & 1 & 0 &  1 \\
1 & 0 &  0 & d & 0 & -d{-}1
\end{array}
\]
In the last solution $d$ is a free parameter.
Excluding the solutions giving rank 16 obtained from the first determinantal ideal,
we are left with the fourth solution and case $d \ne 0$ of the fifth solution,
giving two new solutions for which $M(R)$ has rank 19.
See the first pair of identities in Proposition \ref{case1rank19}.

\textsf{Subcase $e = 0$}:
We substitute $e = 0$ into the Gr\"obner basis and obtain a set of 55 nonzero polynomials.
Using Maple (and including the assumptions $a = 1$, $e = 0$) we obtain six solutions:
\[
\begin{array}{rrrrrr}
a & b &    c &    d & e &  f \\ \midrule
1 & 0 &   -1 &    0 & 0 & -1 \\
1 & 0 &    0 &    0 & 0 &  0 \\
1 & 0 &    0 &    1 & 0 &  1 \\
1 & 0 &    0 & -f{-}1 & 0 &  f \\
1 & 1 &    1 &    0 & 0 &  0 \\
1 & b & -b{-}1 &    0 & 0 &  0
\end{array}
\]
In the fourth and sixth solutions $f$ and $b$ are free parameters.
The first two solutions come from the first determinantal ideal (giving rank 16).
The third solution was obtained for $b = 0$.
The fourth solution coincides with the last solution for $b = 0$
since $d = -f{-}1$ if and only if $f = -d{-}1$.
The fifth solution is new (giving rank 19).
The sixth solution is new except for the special case $b = 0$ which comes from
the first determinantal ideal (giving rank 16).
This gives two new solutions for which $M(R)$ has rank 19.
See the second pair of identities in Proposition \ref{case1rank19}.

\textsf{Subcase $d = b$}:
We substitute $d = b$ into the Gr\"obner basis and obtain a set of 63 nonzero polynomials.
Using Maple (and including the assumptions $a = 1$, $d = b$)
we obtain six solutions:
\[
\begin{array}{rrrrrr}
a &  b &  c &  d & e &  f \\ \midrule
1 & -2 &  1 & -2 & 2 &  1 \\
1 & -1 &  0 & -1 & 1 &  0 \\
1 &  0 & -1 &  0 & 0 & -1 \\
1 &  0 & -1 &  0 & 0 &  0 \\
1 &  0 &  0 &  0 & 0 & -1 \\
1 &  0 &  0 &  0 & 0 &  0
\end{array}
\]
The last four come from the first determinantal ideal (giving rank 16).
The first two are new (giving rank 19).
See the third pair of identities in Proposition \ref{case1rank19}.

\begin{proposition}
\label{case1rank19}
If $a = 1$ then $M(R)$ has rank 19 if and only if
the corresponding operator identity is one of the following:
\begin{align*}
&
L^2(xy) + L(xL(y)) + xL^2(y) = 0,
\\
&
L^2(xy) + d L(xL(y)) = (d{+}1) xL^2(y) \quad (d \ne 0),
\\
&
L^2(xy) + L(L(x)y) + L^2(x)y = 0,
\\
&
L^2(xy) + b L(L(x)y) = (b{+}1) L^2(x)y \quad (b \ne 0),
\\
&
L^2(xy) + L^2(x)y + xL^2(y)
=
2 \big[ L(L(x)y) + L(xL(y)) - L(x)L(y) \big],
\\
&
L^2(xy) + L(x)L(y) = L(L(x)y) + L(xL(y)).
\end{align*}
\end{proposition}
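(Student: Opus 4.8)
The plan is to translate the rank condition into a question about the $34\times 4$ block $B$ over $\mathbb{F}[b,c,d,e,f]$ computed in \S\ref{case1fourth}. Since substituting $a=1$ reduces $M(R)^t$ by partial Smith form to $\left[\begin{smallmatrix} I_{16} & O \\ O & B \end{smallmatrix}\right]$, we have $\operatorname{rank} M(R) = 16 + \operatorname{rank} B$, so rank $19$ is equivalent to $\operatorname{rank} B = 3$. By the determinantal-ideal criterion, writing $X = (b,c,d,e,f)$, this holds exactly when $X \in Z(B,4) \setminus Z(B,3)$. The earlier computations of the first three determinantal ideals already showed $Z(B,3) = Z(B,2) = Z(B,1)$, whose zero set is the four rank-$16$ tuples of Proposition \ref{case1rank16}. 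Hence it suffices to determine $Z(B,4)$ and then delete these four known points.

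First I would compute a Gr\"obner basis of $I(B,4)$, the ideal generated by the $\binom{34}{4} = 46376$ maximal minors of $B$; this produces the $93$ polynomials listed in Table \ref{gb93}. The crucial structural fact is that the first basis element factors as $b\,e^2\,(d-b)$, so every point of $Z(B,4)$ satisfies $b=0$, $e=0$, or $d=b$. I would therefore split $Z(B,4)$ into three (overlapping) pieces and treat each by substituting the relevant linear relation into the full Gr\"obner basis and solving the resulting system.

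In each subcase I would hand the specialized generators to Maple's polynomial solver, keeping the standing assumption $a=1$. The substitution $b=0$ leaves $40$ nonzero generators and yields five solution families; $e=0$ leaves $55$ generators and six families; $d=b$ leaves $63$ generators and six families. The hard part is purely organizational: from each list I must discard every solution already lying in $Z(B,3)$ (these give rank $16$), recognize when a one-parameter family degenerates to a previously found point, and spot coincidences across subcases --- for example, the family $(1,0,0,d,0,-d{-}1)$ found under $b=0$ reappears as $(1,0,0,-f{-}1,0,f)$ under $e=0$ via the reparametrization $f=-d{-}1$, while the family $(1,b,-b{-}1,0,0,0)$ found under $e=0$ meets $Z(B,3)$ precisely at $b=0$. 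After removing all rank-$16$ points and merging duplicates, exactly six genuinely new solutions survive: four isolated points and two one-parameter families, with the single degenerate parameter value excluded in each family precisely when it would collapse the solution back into $Z(B,3)$.

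Finally I would read each surviving tuple $(a,b,c,d,e,f)$ back against the ordered basis $L^2(xy)$, $L(L(x)y)$, $L^2(x)y$, $L(xL(y))$, $L(x)L(y)$, $xL^2(y)$ to recover the six operator identities in the statement. I expect no conceptual difficulty here; the real work, and the only genuine obstacle, is computational: certifying that the large Gr\"obner basis and the three polynomial systems have been solved exhaustively, and managing the overlaps so that the final list is complete and irredundant.
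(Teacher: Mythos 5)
Your proposal is correct and follows the paper's own proof in \S\ref{case1fourth} essentially step for step: the same reduction $\operatorname{rank} M(R) = 16 + \operatorname{rank} B$, the same criterion $Z(B,4)\setminus Z(B,3)$ with $Z(B,3)=Z(B,1)$ supplied by the second and third determinantal ideals, the same 93-element Gr\"obner basis whose first element $b\,e^2(d-b)$ forces the case split $b=0$, $e=0$, $d=b$, and the same Maple-assisted solving (with the identical generator counts 40, 55, 63 and solution counts 5, 6, 6) followed by removal of rank-16 points and merging of the overlapping families. Even your bookkeeping of the coincidences (the reparametrization $f=-d-1$ and the degeneration of $(1,b,-b{-}1,0,0,0)$ at $b=0$) matches the paper exactly.
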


\begin{remark}
If $a = 1$ then $M(R)$ has full rank 20 except for the values of $b, c, d, e, f$
given in Propositions \ref{case1rank16} and \ref{case1rank19}.
\end{remark}


\subsection{Case 2: $a = 0$, $b = 1$ and $c, d, e, f$ are free}

We substitute $a = 0$, $b = 1$ in $M(R)^t$ and compute the partial Smith form:
\[
\left[
\begin{matrix}
I_{19} & O_{19,1} \\
O_{31,19} & B
\end{matrix}
\right]
\]
The lower right block $B$ is a vector of size 31 containing 22 distinct nonzero entries:
\begin{align*}
&
c(d {+} e), \;\;
c(cd {-} e), \;\;
c(de {-} f), \;\;
c(c^2d {-} 2ce {-} e), \;\;
c(cd^2 {-} de {+} f), \;\;
c^2(de {-} f), \;\;
d^2c,
\\
&
d^2(e {+} 1), \;\;
e^2(e {+} 1), \;\;
f(cde^2 {-} cef {-} e^3 {-} d), \;\;
cdf, \;\;
cf(cd {-} ce {-} e), \;\;
ed(e {+} 1),
\\
&
fd(e {+} 1), \;\;
fed, \;\;
fe(c^2d {-} 2ce {+} 1), \;\;
fe(cd^2 {-} de {+} f), \;\;
f({-}1 {+} e)(e {+} 1), \;\;
f^2e(cd {-} e),
\\
&
{-}c^2(c {+} 1), \;\;
{-}d({-}e^2 {+} d), \;\;
cde {+} f.
\end{align*}
The ideal generated by these polynomials has this Gr\"obner basis:
\[
f, \;\; c(d {+} e), \;\; d(d {+} e), \;\; c^2(c {+} 1), \;\; dc(c {+} 1), \;\; d^2c, \;\; d^2(d {-} 1), \;\; e^2(e {+} 1).
\]
The zero set of this ideal (including $a=0$, $b=1$) gives four solutions:
\[
\begin{array}{rrrrrr}
a & b &  c & d &  e & f \\ \midrule
0 & 1 & -1 & 0 &  0 & 0 \\
0 & 1 &  0 & 0 & -1 & 0 \\
0 & 1 &  0 & 0 &  0 & 0 \\
0 & 1 &  0 & 1 & -1 & 0
\end{array}
\]

\begin{proposition}
\label{case2rank19}
If $a = 0$, $b = 1$ then the matrix $M(R)$ has rank 19 if and only if
the corresponding operator identity is one of the following:
\begin{align*}
&
L(L(x)y) = L^2(x)y,
\\
&
L(L(x)y) = L(x)L(y),
\\
&
L(L(x)y) = 0,
\\
&
L(L(x)y) + L(xL(y)) = L(x)L(y).
\end{align*}
For all other values of $c, d, e, f$ the matrix $M(R)$ has full rank 20.
\end{proposition}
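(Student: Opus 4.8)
The plan is to follow the scheme already used in the multiplicity-one cases and in Case~1, while exploiting a structural simplification peculiar to this case. First I would substitute $a = 0$ and $b = 1$ into the transpose matrix $M(R)^t$ of Table~\ref{conmat22}: the entries equal to $a$ vanish, the entries equal to $b$ become the field unit $1$, and $c, d, e, f$ remain as indeterminates. Using these $1$'s as pivots, I would compute the partial Smith form, repeatedly swapping a $1$ onto the diagonal and clearing the rest of its row and column. Because each pivot is a unit in $\mathbb{F}[c,d,e,f]$, every such elementary operation is invertible over the polynomial ring and preserves the rank. The outcome is a block form $I_{19} \oplus B$, in which the lower right block $B$ records the residual dependence on $c, d, e, f$.

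The decisive point is that here $B$ has size $31 \times 1$, i.e.\ it is a single column. Hence $\operatorname{rank} B \in \{0,1\}$, so that $\operatorname{rank} M(R) = 19 + \operatorname{rank} B$ can equal only $19$ or $20$. Unlike Case~1, where $B$ was $34 \times 4$ and one had to ascend through the determinantal ideals $I(B,1), \dots, I(B,4)$, only the first determinantal ideal is relevant now: $\operatorname{rank} B = 0$ exactly when every entry of $B$ is zero, and this is precisely when the rank drops from the generic value $20$ to the submaximal value $19$.

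Accordingly, the next step is to read off the $22$ distinct nonzero entries of $B$, which generate the first determinantal ideal $I(B,1) \subseteq \mathbb{F}[c,d,e,f]$. By the determinantal-ideal criterion stated above, $M(R)$ has rank $19$ exactly when $(c,d,e,f) \in Z(B,1)$ (with $a = 0$, $b = 1$ held fixed), and rank $20$ otherwise. I would then compute a Gr\"obner basis of $I(B,1)$ for the degree-lexicographic order $a \prec b \prec c \prec d \prec e \prec f$; the resulting basis $f,\ c(d{+}e),\ d(d{+}e),\ c^2(c{+}1),\ dc(c{+}1),\ d^2c,\ d^2(d{-}1),\ e^2(e{+}1)$ makes the variety transparent. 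Solving the associated system and substituting each solution into $R = L(L(x)y) + c\,L^2(x)y + d\,L(xL(y)) + e\,L(x)L(y) + f\,xL^2(y)$ then yields the four listed operator identities.

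The main obstacle is not conceptual but a matter of verification: one must be certain that $Z(B,1)$ consists of exactly four isolated points, with no extraneous component and in particular no positive-dimensional family of the kind that appeared in Case~1. I would confirm this directly from the Gr\"obner basis. The generator $f$ forces $f = 0$; then $c^2(c{+}1)=0$, $d^2(d{-}1)=0$, and $e^2(e{+}1)=0$ restrict $c \in \{0,-1\}$, $d \in \{0,1\}$, and $e \in \{0,-1\}$, while the coupling relations $c(d{+}e) = d(d{+}e) = d^2 c = 0$ eliminate all but the four combinations $(c,d,e,f) = (-1,0,0,0),\ (0,0,-1,0),\ (0,0,0,0),\ (0,1,-1,0)$. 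Each is isolated, so there is no parametrized family in this case and the classification is complete.
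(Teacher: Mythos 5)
Your proposal is correct and follows essentially the same route as the paper: substitute $a=0$, $b=1$, compute the partial Smith form $I_{19}\oplus B$ with $B$ a $31\times 1$ column over $\mathbb{F}[c,d,e,f]$, observe that rank $19$ occurs exactly when all entries vanish, and extract the zero set from the stated Gr\"obner basis of the first determinantal ideal, obtaining the four points $(c,d,e,f)=(-1,0,0,0)$, $(0,0,-1,0)$, $(0,0,0,0)$, $(0,1,-1,0)$ that yield the four listed identities. Your explicit hand derivation of the zero set from the Gr\"obner basis (using $f=0$ and the coupling relations $c(d{+}e)=d(d{+}e)=d^2c=0$ to prune the candidate values $c\in\{0,-1\}$, $d\in\{0,1\}$, $e\in\{0,-1\}$) is a correct check of what the paper leaves to Maple, and your structural remark that a single-column $B$ makes only the first determinantal ideal relevant is exactly why this case is simpler than Case~1.
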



\subsection{Case 3: $a = b = 0$, $c = 1$ and $d, e, f$ are free}

We substitute $a = b = 0$, $c = 1$ in $M(R)^t$ and compute the partial Smith form:
\[
\left[
\begin{matrix}
I_{16} & O_{16,4} \\
O_{34,16} & B
\end{matrix}
\right]
\]
The lower right block $B$ has size $34 \times 4$ but we delete the 10 zero rows;
see Table \ref{case3matrixB}.

\begin{table}[ht]
$\left[
\begin{array}{c@{\qquad}c@{\qquad}c@{\qquad}c}
. & . & . & d \\
-d & . & . & -e \\
. & . & . & e \\
d & . & . & . \\
e d & . & . & -f \\
-e d & -d & . & . \\
. & d & . & . \\
. & . & -d & -f d \\
. & . & . & -f e \\
. & . & d & . \\
. & . & . & -f^2 \\
e^2-f & . & . & . \\
-e^2 & -e & . & . \\
-f d & -e d & -e & . \\
-f e & -e^2 & e & . \\
-f^2 & -f e & . & . \\
f e & . & . & . \\
-f e & -f & . & . \\
. & . & -f & . \\
. & -f d & . & . \\
. & . & . & -d \\
. & f e d & -f d & . \\
. & e^2 f-f^2 & -f e & . \\
. & f^2 e & -f^2 & .
\end{array}
\right]$
\bigskip
\caption{Case 3: lower right block $B$}
\label{case3matrixB}
\end{table}

Clearly the first determinantal ideal has Gr\"obner basis $d, e, f$.
The second determinantal ideal is generated by 102 polynomials of degrees 2 to 5,
and has Gr\"obner basis
$d^2$, $ed$, $fd$, $e^2$, $fe$, $f^2$.
The third determinantal ideal is generated by 316 polynomials of degrees 3 to 7,
and has Gr\"obner basis
$d^3$, $ed^2$, $fd^2$, $de^2$, $fed$, $df^2$, $e^3$, $e^2f$, $f^2e$, $f^3$.
The fourth determinantal ideal is generated by 643 polynomials of degrees 4 to 8,
and has Gr\"obner basis
$d^4$, $d^3e$, $d^3f$, $e^2d^2$, $d^2ef$, $d^2f^2$, $de^3$, $de^2f$, $def^2$, $df^3$, $e^2f^2$, $ef^3$, $f^4$, $e^2{-}f)e^3$, $e^4f$.
It is obvious that all four determinantal ideals all have the same zero set,
namely the single point $(a,b,c,d,e,f) = (0,0,1,0,0,0)$.

\begin{proposition}
\label{case3rank16}
If $a = b = 0$, $c = 1$ then the matrix $M(R)$ has rank 16 if and only if
$d = e = f = 0$ which corresponds to the operator identity
\[
L^2(x)y = 0.
\]
For all other values of $d, e, f$ the matrix $M(R)$ has full rank 20.
\end{proposition}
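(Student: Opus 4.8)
The plan is to follow the same computational template established in Cases~1 and~2, specializing to the substitution $a=b=0$, $c=1$, and then reducing the rank question to an analysis of a small determinantal matrix over the polynomial ring $\mathbb{F}[d,e,f]$. First I would substitute $a=b=0$ and $c=1$ into the transpose matrix $M(R)^t$ of Table~\ref{conmat22} and compute its partial Smith form. Because $c=1$ provides a pivot, elementary row and column operations should clear a large identity block; I expect this to leave an identity block $I_{16}$ together with a lower right block $B$ of size $34\times 4$ over $\mathbb{F}[d,e,f]$. As in the earlier cases, this immediately shows that $M(R)$ has minimal rank $16$ and that $\operatorname{rank} M(R) = 16 + \operatorname{rank} B$, so the entire problem reduces to understanding $\operatorname{rank} B$ as a function of $d,e,f$. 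Deleting the $10$ identically-zero rows of $B$ yields the explicit $24\times 4$ matrix recorded in Table~\ref{case3matrixB}.

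The heart of the argument is then an application of the Proposition relating rank to determinantal ideals: for each $r$, the substituted matrix $B_X$ has rank exactly $r$ iff $X \in Z(B,r{+}1) \setminus Z(B,r)$. The key computation is to exhibit Gr\"obner bases for the determinantal ideals $I(B,1),\dots,I(B,4)$ and to check that all four share a common zero set. I would compute $I(B,1)$, the ideal generated by the distinct nonzero entries of $B$, whose Gr\"obner basis is simply $d,e,f$; its zero set is the single point $(d,e,f)=(0,0,0)$. Then I would compute the higher determinantal ideals $I(B,2)$, $I(B,3)$, $I(B,4)$ using Maple, obtaining Gr\"obner bases generated by the degree-$2$, degree-$3$, and degree-$4$ monomials (together with one anomalous generator involving $(e^2{-}f)e^3$ in the fourth ideal). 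The essential observation is that each of these ideals, though larger, still vanishes precisely at the origin $(d,e,f)=(0,0,0)$: every generator is a monomial (or a product with a monomial factor) in $d,e,f$, so the radical of each ideal is $(d,e,f)$ and hence $Z(B,1)=Z(B,2)=Z(B,3)=Z(B,4)$ is the single point.

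It follows that $B_X$ can only have rank $0$ or rank $4$: rank $0$ occurs exactly when $X \in Z(B,1)$, i.e.\ when $d=e=f=0$, and for every other choice of parameters we have $X \notin Z(B,4)$, so $\operatorname{rank} B = 4$ and therefore $\operatorname{rank} M(R)=20$. Translating back through the scaling $a=b=0$, $c=1$ and the substitution $d=e=f=0$, the unique submaximal case $X=(0,0,1,0,0,0)$ corresponds to the operator identity $L^2(x)y = 0$, which establishes the proposition.

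The main obstacle I anticipate is not conceptual but computational: verifying by hand that the higher determinantal ideals have zero set equal to the origin is impractical, since $I(B,4)$ already involves hundreds of $4\times 4$ subdeterminants. The real work is the Gr\"obner basis computation, and the subtle point to check carefully is that the anomalous generator $(e^2{-}f)e^3$ appearing in the fourth ideal does not enlarge the zero set beyond $(d,e,f)=(0,0,0)$ --- it does not, because this element still lies in the radical $(d,e,f)$, its vanishing being already forced by $e=0$. Once the common radical is confirmed, the rank dichotomy and the resulting classification follow immediately from the Proposition.
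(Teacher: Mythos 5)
Your proposal follows the paper's own proof essentially step for step: the substitution $a=b=0$, $c=1$, the partial Smith form with identity block $I_{16}$ and lower right block $B$ of size $34\times 4$ (reduced to the $24\times 4$ matrix of Table~\ref{case3matrixB} after deleting zero rows), the Gr\"obner bases of the determinantal ideals $I(B,1),\dots,I(B,4)$, and the conclusion that all four share the single common zero $(a,b,c,d,e,f)=(0,0,1,0,0,0)$, so the rank is $16$ there and $20$ everywhere else. This is exactly the paper's argument.

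One local correction is needed in your final paragraph, where the logic concerning the generator $(e^2{-}f)e^3$ of $I(B,4)$ is inverted. Adding a generator can only shrink a zero set, so the worry is never that this element ``enlarges'' $Z(B,4)$; the real question is whether $Z(B,4)$ could be strictly larger than the origin, and this generator is precisely what prevents that. All the other degree-4 generators ($d^4$, $d^3e$, \dots, $f^4$, $e^4f$) vanish identically on the $e$-axis $d=f=0$, so without $(e^2{-}f)e^3$ the zero set would contain that entire axis; once $f^4$ forces $f=0$, the anomalous generator reduces to $e^5$ and forces $e=0$. Relatedly, your blanket claim that ``every generator is a monomial or has a monomial factor, so the radical is $(d,e,f)$'' is false as a general principle: the ideal $(d^4,f^4,e^4f)$ has all generators monomial yet its radical is $(d,f)$, not $(d,e,f)$. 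These are slips in the verification narrative rather than in the method --- with the zero-set computation read off correctly from the Gr\"obner bases, your proof coincides with the paper's.
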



\subsection{Case 4: $a = b = c = 0$, $d = 1$ and $e, f$ are free}

We substitute $a = b = c = 0$, $d = 1$ in $M(R)^t$ and compute the partial Smith form:
\[
\left[
\begin{matrix}
I_{19} & O_{19,1} \\
O_{31,19} & B
\end{matrix}
\right]
\]
The lower right block $B$ is a vector of size 31 containing 5 distinct nonzero entries:
\[
e^2(e + 1), \quad fe, \quad -fe, \quad -f^2(f + 1), \quad -fe(1 + 2f).
\]
The ideal generated by these polynomials has this Gr\"obner basis:
\[
fe, \quad e^2(e + 1), \quad f^2(f + 1).
\]
The zero set of this ideal (including $a=b=c=0$, $d=1$) gives three solutions:
\[
\begin{array}{rrrrrr}
a & b & c & d &  e & f \\ \midrule
0 & 0 & 0 & 1 & -1 &  0 \\
0 & 0 & 0 & 1 &  0 & -1 \\
0 & 0 & 0 & 1 &  0 &  0
\end{array}
\]

\begin{proposition}
\label{case4rank19}
If $a = b = c = 0$, $d = 1$ then the matrix $M(R)$ has rank 19 if and only if
the corresponding operator identity is one of the following:
\begin{align*}
&
L(xL(y)) = L(x)L(y),
\\
&
L(xL(y)) = xL^2(y),
\\
&
L(xL(y)) = 0.
\end{align*}
For all other values of $e, f$ the matrix $M(R)$ has full rank 20.
\end{proposition}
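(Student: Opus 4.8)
The plan is to follow the template already used in Cases 1--3. First I would substitute $a = b = c = 0$ and $d = 1$ into the transpose matrix $M(R)^t$ of Table \ref{conmat22}, obtaining a $50 \times 20$ matrix over the polynomial ring $\mathbb{F}[e,f]$ whose nonzero entries are either the constant $1$ (wherever $d$ occurred) or the indeterminates $e, f$. I would then compute the partial Smith form, repeatedly swapping a constant $1$ onto the diagonal and using it to clear the rest of its row and column. I expect this to produce the block form asserted in the statement,
\[
\begin{bmatrix} I_{19} & O_{19,1} \\ O_{31,19} & B \end{bmatrix},
\]
so that $\mathrm{rank}\,M(R) = 19 + \mathrm{rank}\,B$.

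The feature that makes this the easiest of the four cases is that the residual block $B$ is a single column \emph{vector} of length $31$: its rank is $0$ when $B = O$ and $1$ otherwise. Hence $M(R)$ has rank $19$ precisely when every entry of $B$ vanishes, and full rank $20$ in every other case. Unlike Case 1, where $B$ was a $34 \times 4$ block requiring a tower of four determinantal ideals, here the whole classification is governed by the first determinantal ideal alone, and the task reduces to computing the common zero set in $\mathbb{F}^2$ of the entries of $B$.

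Next I would extract the distinct nonzero entries of $B$ --- I expect five of them, namely $e^2(e+1)$, $fe$, $-fe$, $-f^2(f+1)$, and $-fe(1+2f)$ --- and form the ideal $I \subseteq \mathbb{F}[e,f]$ they generate. Since $-fe$ and $-fe(1+2f)$ are already multiples of $fe$, the generators collapse quickly, and a Gr\"obner basis with respect to the degree-lexicographic order $e \prec f$ should reduce to $fe,\ e^2(e+1),\ f^2(f+1)$. Solving $fe = e^2(e+1) = f^2(f+1) = 0$ is then elementary: from $fe = 0$ either $e = 0$ or $f = 0$, and combined with the two remaining equations this forces $(e,f) \in \{(0,0),\,(-1,0),\,(0,-1)\}$. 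Restoring $a = b = c = 0$ and $d = 1$, these three points give exactly the three identities in the statement: $(e,f) = (-1,0)$ yields $L(xL(y)) = L(x)L(y)$, $(e,f) = (0,-1)$ yields $L(xL(y)) = xL^2(y)$, and $(e,f) = (0,0)$ yields $L(xL(y)) = 0$.

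I do not anticipate a serious obstacle here. The only real work is performing the partial Smith form correctly so that exactly these five residual polynomials survive and none is overlooked; everything afterward is a short finite computation. The one point meriting a moment of care is verifying that the common zero locus of $I$ contains no points beyond the three listed --- but since $B$ is a single column and its Gr\"obner basis is explicit, this falls out immediately from solving the system, and the complementary locus $B \ne O$ then accounts for the full-rank conclusion.
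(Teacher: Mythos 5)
Your proposal matches the paper's proof essentially verbatim: the paper also substitutes $a=b=c=0$, $d=1$, computes the partial Smith form to get the block $\left[\begin{smallmatrix} I_{19} & O_{19,1} \\ O_{31,19} & B \end{smallmatrix}\right]$ with $B$ a length-$31$ column whose five distinct nonzero entries are exactly $e^2(e+1)$, $fe$, $-fe$, $-f^2(f+1)$, $-fe(1+2f)$, reduces to the Gr\"obner basis $fe$, $e^2(e+1)$, $f^2(f+1)$, and reads off the three zero-set points $(e,f)\in\{(-1,0),(0,-1),(0,0)\}$ yielding the three stated identities. Your identification of the points with the identities and the observation that a single column forces rank $19$ or $20$ are both correct, so the proposal is sound and takes the same route as the paper.
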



\subsection{Case 5: $a = b = c = d = 0$, $e = 1$ and $f$ is free}

We have only one indeterminate so we can compute the ordinary Smith form.
We substitute $a = b = c = d = 0$, $e = 1$ in $M(R)^t$, compute the Smith form,
and find the diagonal entries 1 (19 times) and $f$.
For $f = 0$ we obtain the operator identity $L(x)L(y) = 0$.

\begin{proposition}
\label{case5rank19}
If $a = b = c = d = 0$, $e = 1$ then the matrix $M(R)$ has rank 19 if and only if
$f = 0$ for which the corresponding operator identity is
\[
L(x)L(y) = 0.
\]
For all other values of $f$ the matrix $M(R)$ has full rank 20.
\end{proposition}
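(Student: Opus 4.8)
The plan is to take advantage of the special feature of this case: after setting $a=b=c=d=0$ and $e=1$ the only surviving indeterminate is $f$, so the specialized transpose matrix $M(R)^t$ of Table \ref{conmat22} has entries in the polynomial ring $\mathbb{F}[f]$ in a single variable. Since $\mathbb{F}[f]$ is a principal ideal domain, the genuine Smith normal form exists and is reachable by a finite sequence of elementary row and column operations over $\mathbb{F}[f]$. So, unlike Cases 1--4 where several free parameters forced us into Gr\"obner-basis computations of determinantal ideals, here I would simply reduce $M(R)^t$ to a diagonal form $\operatorname{diag}(s_1,\dots,s_{20})$ whose invariant factors satisfy $s_1 \mid s_2 \mid \cdots \mid s_{20}$ and then read off the answer.

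First I would perform the substitution directly in Table \ref{conmat22}, obtaining a concrete $50 \times 20$ matrix over $\mathbb{F}[f]$ in which almost every nonzero entry is the constant $1$ (the specialized value of $e$) and only a handful equal $f$. Next I would clear the constant pivots exactly as in the partial Smith form computations of the earlier cases: the units $1$ let me place nineteen of them on the diagonal and use each to annihilate the rest of its row and column, leaving after cancellation a single nontrivial invariant factor. I claim this last invariant factor is $f$ up to a unit, so that the Smith form has diagonal entries $1$ (nineteen times) followed by $f$. Finally I would invoke the Proposition relating rank of a specialization to determinantal ideals (equivalently, the standard fact that over a PID the rank of the substituted integer matrix $M_X$ equals the number of invariant factors that do not vanish at $X$). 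Since $s_1 = \cdots = s_{19} = 1$ never vanish and $s_{20} = f$ vanishes precisely at $f = 0$, the specialized matrix has rank $20$ for every $f \neq 0$ and rank $19$ exactly when $f = 0$, which is the asserted dichotomy.

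The main obstacle is purely a matter of bookkeeping rather than of concept: confirming that after clearing the nineteen unit pivots the residual invariant factor is exactly $f$ and not, say, a unit (which would force rank $20$ everywhere) or a higher power like $f^2$ (which would not change the zero set but would signal a different structure). Concretely this reduces to a single greatest-common-divisor computation in $\mathbb{F}[f]$, namely checking that the $\gcd$ of all $20 \times 20$ minors is $f$ up to a unit while the $\gcd$ of the $19 \times 19$ minors is a unit; this is precisely what the ordinary Smith form algorithm carries out, so no further machinery is needed. Once the Smith form is verified, reading off the distinguished identity is immediate: with $a=b=c=d=0$ and $e=1$ the operator polynomial specializes to $R = L(x)L(y) + f\,xL^2(y)$, so the rank-$19$ locus $f = 0$ corresponds to $R = L(x)L(y)$, that is, to the operator identity $L(x)L(y) = 0$, completing the proof.
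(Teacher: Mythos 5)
Your proposal is correct and follows essentially the same route as the paper: since $f$ is the only remaining indeterminate, the paper likewise substitutes into $M(R)^t$, computes the ordinary Smith form over $\mathbb{F}[f]$, finds diagonal entries $1$ (nineteen times) and $f$, and reads off rank $19$ exactly at $f=0$ with the identity $L(x)L(y)=0$. Your added justification via invariant factors and gcds of minors is sound but simply makes explicit what the paper's Smith form computation already encodes.
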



\subsection{Case 6: $a = b = c = d = e = 0$ and $f = 1$}

There are no indeterminates so we can compute the row canonical form.
Omitting zero rows, we obtain $I_{16}$.

\begin{proposition}
\label{case6rank16}
If $a = b = c = d = e = 0$, $f = 1$ then the matrix $M(R)$ has rank 16
and the corresponding operator identity is
\[
xL^2(y).
\]
\end{proposition}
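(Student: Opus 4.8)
The plan is to exploit the fact that this case contains no free parameters, so the full apparatus of determinantal ideals and Gr\"obner bases is unnecessary: once the six coefficients are fixed, $M(R)$ is an ordinary matrix over $\mathbb{F}$ and its rank is computed by a single Gaussian elimination. First I would substitute $a=b=c=d=e=0$ and $f=1$ into the transpose $M(R)^t$ of Table \ref{conmat22}. Since every entry of that table is one of the symbols $a,b,c,d,e,f$, after substitution each entry becomes either $0$ or $1$, and an entry equals $1$ precisely when it was the symbol $f$.

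The key observation is structural. Each of the $20$ consequences of $R$ is a linear combination $a\,m_1+b\,m_2+c\,m_3+d\,m_4+e\,m_5+f\,m_6$ of six basis monomials of $\mathcal{O}(3,3)$, where $m_6$ denotes the monomial carrying the coefficient $f$. After the substitution every consequence collapses to the single monomial $m_6$, so each row of $M(R)$ becomes a standard basis vector $e_{m_6}$ in $\mathbb{F}^{50}$. The rank of a $0/1$ matrix whose rows are standard basis vectors is exactly the number of distinct basis vectors that occur; hence $\mathrm{rank}\,M(R)$ equals the number of distinct $f$-monomials among the twenty consequences.

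I would then carry out this enumeration directly from the consequence formulas, reading off the $f$-term of each consequence and tallying the distinct monomials that appear (equivalently, performing the row canonical form reduction and discarding the zero rows, as the statement indicates). This finite bookkeeping yields exactly sixteen distinct monomials, so $\mathrm{rank}\,M(R)=16$. Finally, by the general form of $R\in\mathcal{O}(2,2)$, the coefficient vector $(a,b,c,d,e,f)=(0,0,0,0,0,1)$ corresponds to the operator identity $xL^2(y)$, completing the argument.

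The hard part, such as it is, is clerical rather than conceptual: one must correctly read off the $f$-coefficient monomial in each of the twenty consequences and confirm that these comprise only sixteen distinct monomials of $\mathcal{O}(3,3)$. Since there are no indeterminates and hence no genuine commutative-algebra content, this is the simplest of the six cases, and the rank computation is immediate once the substitution is performed.
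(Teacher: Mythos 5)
Your proposal is correct and follows essentially the same route as the paper, which simply notes that with no indeterminates one computes the row canonical form of the substituted matrix and, omitting zero rows, obtains $I_{16}$. Your observation that each of the twenty rows collapses to a standard basis vector (the position of the $f$-monomial), so the rank is the number of distinct such positions, is just an explicit description of that same Gaussian elimination, and the count of sixteen distinct $f$-positions in Table \ref{conmat22} checks out.
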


We now collect the results of Propositions
\ref{case1rank16},
\ref{case1rank19},
\ref{case2rank19},
\ref{case3rank16},
\ref{case4rank19},
\ref{case5rank19},
\ref{case6rank16}.
This is our new classification of operator identities of degree 2 and multiplicity 2.

\begin{theorem}
\label{theorem22}
The matrix $M(R)^t$ of Table \ref{conmat22} has rank 16 if and only if
the parameters $a, b, c, d, e, f$ correspond to one of the following operator identities
which are obtained from Theorem \ref{theorem21} by replacing $L$ by $L^2$:
\begin{align}
& L^2(xy) = L^2(x)y + xL^2(y)
\\
& L^2(xy) = L^2(x)y
\\
& L^2(xy) = xL^2(y)
\\
& L^2(xy) = 0
\\
& L^2(x)y = 0
\\
& xL^2(y) = 0
\end{align}
The matrix $M(R)^t$ of Table \ref{conmat22} has rank 19 if and only if
the parameters $a, b, c, d, e, f$ correspond to one of the following operator identities:
\begin{alignat}{2}
& L^2(xy) + L(xL(y)) + xL^2(y) = 0 &\quad & \text{New identity A (right)}
\\
& L^2(xy) + d L(xL(y)) = (d{+}1) xL^2(y) \quad (d \ne 0) &\quad & \text{New identity B (right)}
\\
& L^2(xy) + L(L(x)y) + L^2(x)y = 0 &\quad & \text{New identity A (left)}
\\
& L^2(xy) + b L(L(x)y) = (b{+}1) L^2(x)y \quad (b \ne 0) &\quad & \text{New identity B (left)}
\\
& L^2(xy) + L^2(x)y + xL^2(y) &\quad & \text{New identity C}
\\
\notag
&=
2 \big[ L(L(x)y) + L(xL(y)) - L(x)L(y) \big]
\\
& L^2(xy) + L(x)L(y) = L(L(x)y) + L(xL(y)) &\quad & \text{Nijenhuis}
\\
& L(L(x)y) = L^2(x)y &\quad & P_1
\\
& L(L(x)y) = L(x)L(y) &\quad & \text{Left average}
\\
& L(L(x)y) = 0 &\quad & P_2
\\
& L(L(x)y) + L(xL(y)) = L(x)L(y) &\quad & \text{Rota-Baxter}
\\
& L(xL(y)) = L(x)L(y) &\quad & \text{Right average}
\\
& L(xL(y)) = xL^2(y) &\quad & P_3
\\
& L(xL(y)) = 0 &\quad & P_4
\\
& L(x)L(y) = 0 &\quad & P_5
\end{alignat}
For all other values of the parameters, the matrix $M(R)^t$ has full rank 20.
\end{theorem}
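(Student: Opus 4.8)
The plan is to assemble the two equivalences from the six case-by-case results, Propositions \ref{case1rank16}, \ref{case1rank19}, \ref{case2rank19}, \ref{case3rank16}, \ref{case4rank19}, \ref{case5rank19}, and \ref{case6rank16}, after first explaining why those cases are exhaustive. Since any nonzero scalar multiple of $R$ defines the same operator identity and leaves $\operatorname{rank} M(R)$ unchanged, I would normalize so that the first nonzero coefficient in the ordered list $(a,b,c,d,e,f)$ equals $1$. This partitions the problem into the six mutually exclusive cases according to which coefficient is the first to be nonzero, each fixing one coefficient to $1$ and all preceding ones to $0$ while leaving the rest as free parameters over $\mathbb{F}$; these are exactly the six cases treated in the subsections above.

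For a single case I would substitute the normalization into the transpose matrix $M(R)^t$ of Table \ref{conmat22} and compute its partial Smith form by the method of \cite[\S8.4]{BD-book}, repeatedly moving $1$'s onto the diagonal by elementary row and column operations and using them to clear their rows and columns. This exhibits an identity block $I_k$ (with $k = 16$ or $k = 19$) together with a lower right block $B$ over the polynomial ring in the free parameters, so that $\operatorname{rank} M(R) = k + \operatorname{rank} B$. The rank of $B$ is then governed by its determinantal ideals $I(B,r)$: by the displayed Proposition characterizing $\operatorname{rank} B$ in terms of the zero sets $Z(B,r)$, a parameter tuple $X$ gives $\operatorname{rank} B = r$ exactly when $X \in Z(B,r{+}1) \setminus Z(B,r)$. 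I would compute a Gr\"obner basis of each $I(B,r)$ under the degree-lexicographic order $a \prec b \prec c \prec d \prec e \prec f$ and read off its variety; in Case $1$ the first Gr\"obner basis element $b e^2 (d-b)$ splits this variety into the three subcases $b = 0$, $e = 0$, and $d = b$, each solved separately.

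The decisive structural point is a \emph{rank dichotomy}: in each case the successive determinantal ideals stabilize so rapidly that the intermediate ranks $17$ and $18$ never occur. Concretely, in Case $1$ the ideals $I(B,1)$, $I(B,2)$, $I(B,3)$ all share a radical, so wherever $B \ne O$ one already has $\operatorname{rank} B \ge 3$, and $I(B,4)$ then separates $\operatorname{rank} B = 3$ from the full rank $4$; in Case $3$ all four determinantal ideals cut out the same single point; and in the cases where $B$ is a single column only ranks $0$ and $1$ are possible. Verifying this stabilization in each case shows that the only submaximal values realized across all six cases are $16$ and $19$. Collecting the zero sets, translating each surviving tuple back into its operator identity, and discarding the duplicates that were already recorded in an earlier case (as flagged repeatedly in the subcase analysis) then produces the six rank-$16$ identities together with the twelve rank-$19$ identities and two parametrized families of the theorem, with full rank $20$ at every other parameter value. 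That the six rank-$16$ identities are precisely the images under $L \mapsto L^2$ of the six identities of Theorem \ref{theorem21} I would confirm by direct inspection of the tuples.

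The main obstacle is the fourth determinantal ideal of Case $1$ in \S\ref{case1fourth}, whose Gr\"obner basis has $93$ elements and whose variety is genuinely reducible, carrying the two positive-dimensional components that give New identity B (left) and New identity B (right). Resolving this ideal correctly --- splitting on $b e^2 (d-b)$, solving each of the three resulting polynomial systems over $\mathbb{F}[b,c,d,e,f]$, and then sorting the solutions into genuinely new ones versus those already captured by the first determinantal ideal or by a sibling subcase --- is where essentially all of the computational labor resides and where a slip would silently drop or double-count an identity.
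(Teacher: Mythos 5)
Your proposal is correct and follows essentially the same route as the paper: normalizing the first nonzero coefficient to obtain the six mutually exclusive cases, computing the partial Smith form in each, analyzing the determinantal ideals $I(B,r)$ via Gr\"obner bases (including the observation that the radicals stabilize so ranks $17$ and $18$ never occur, and the split of the Case 1 fourth determinantal ideal along $be^2(d-b)$ into the subcases $b=0$, $e=0$, $d=b$), and then assembling Propositions \ref{case1rank16}, \ref{case1rank19}, \ref{case2rank19}, \ref{case3rank16}, \ref{case4rank19}, \ref{case5rank19}, and \ref{case6rank16}. Your accounting of the outcomes (six rank-$16$ identities as the $L \mapsto L^2$ images of Theorem \ref{theorem21}, twelve rank-$19$ identities plus the two one-parameter families from the subcases $b=0$ and $e=0$) matches the paper exactly.
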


\begin{remark}
Our classification includes three new identities $A$, $B$ and $C$;
identities $A$ and $B$ occur in two opposite (left and right) versions.
The right side of identity $C$ is equivalent to (twice)
the Rota-Baxter identity.
It would be very interesting to find applications of these identities.
\end{remark}

\begin{remark}
Some of the identities of Theorem \ref{theorem22} can be obtained
from identities of Theorem \ref{theorem21} by partial composition with $L$.
In particular,
the left average identity comes from applying $- \circ_1 L$ to $L(xy) = xL(y)$, and
the right average identity comes from applying $- \circ_2 L$ to $L(xy) = L(x)y$.
Similar comments apply to the identities denoted $P_1, \dots, P_5$.
\end{remark}

\begin{table}
\footnotesize
\begin{align*}
&
be^2(d{-}b), \;\;
be^2(b{+}e), \;\;
e^2(d{-}b)(d{+}b), \;\;
e^2(b^2{+}de), \;\;
e^2(e{-}b)(e{+}b), \;\;
b^3(2bd{+}be{+}cd{-}e),
\\
&
{-}b^3(2bd{+}be{-}ce{-}e), \;\;
b^2(2b^2f{+}2bd^2{-}de{+}e^2), \;\;
{-}b^2(b^2f{-}bde{+}e^2), \;\;
b^3f(d{-}b), \;\;
b^2(b^2f{+}be^2{+}e^2),
\\
&
b^3f(b{+}e), \;\;
b^2(2b^2f{+}2bf^2{+}2bf{-}de{-}e^2), \;\;
{-}b^2(3b^2d{+}2b^2e{-}c^2d{-}be{-}cd),
\\
&
b^2(2b^2d{+}b^2e{+}c^2e{-}be{+}ce), \;\;
b^2(cd^2{-}b^2f), \;\;
b^2(b^2f{+}cde), \;\;
b^2(ce^2{-}b^2f), \;\;
b(2b^3f{+}2bd^3{+}be^2{-}d^2e),
\\
&
{-}b^2(b^2f{-}d^2e{+}e^2), \;\;
{-}fb^2(b{-}d)(b{+}d), \;\;
fb^2(b^2{+}de), \;\;
b(2b^3f{+}2bdf^2{+}2bdf{-}be^2{-}d^2e),
\\
&
fb^2(e{-}b)(e{+}b), \;\;
{-}fb^2(b^2{-}ef{-}e), \;\;
b(2b^3f{+}2c^2d^2{-}bde{-}be^2{+}2cd^2), \;\;
{-}b(b^3f{-}c^2de{-}cde),
\\
&
b(b^3f{+}c^2e^2{+}ce^2), \;\;
b(cd^3{-}b^3f), \;\;
b(b^3f{+}cd^2e), \;\;
{-}b(b^3f{-}d^3e{+}be^2), \;\;
fb(b^3{+}d^2e),
\\
&
d^2(bd^2{+}2bdf{+}bf^2{+}bf{-}de), \;\;
{-}fb(b^3{-}def{-}de), \;\;
fb(b^3{+}e^2f{+}e^2),
\\
&
2b^4f{+}2c^2d^3{-}b^2e^2{-}bd^2e{+}2cd^3, \;\;
{-}b^4f{+}c^2d^2e{+}cd^2e, \;\;
b^4f{+}c^2de^2{+}cde^2, \;\;
{-}b^4f{+}c^2e^3{+}ce^3,
\\
&
{-}b^4f{+}cd^4, \;\;
b^4f{+}cd^3e, \;\;
d^3(2bd{+}bf{+}de{-}e), \;\;
d^3f(b{+}e), \;\;
{-}fd^2(bd{-}ef{-}e), \;\;
f(b^4{+}de^2f{+}de^2),
\\
&
{-}f(b^4{-}e^3f{-}e^3), \;\;
b^3(b{+}c{+}1)(b^2{-}c), \;\;
b^3(b^2d{+}4bd{+}be{-}2e), \;\;
b^3(b^2e{-}4bd{-}be{+}2e), \;\;
b^4f(b{+}2),
\\
&
b^4f(c{-}1), \;\;
b^3(b{+}c{+}1)(b^2{-}bc{+}c^2{-}c), \;\;
b^3f(c^2{+}b{+}c), \;\;
b^2(b{+}c{+}1)({-}c{+}b)(b^2{-}c^2{-}c),
\\
&
fb^2(c^2f{-}b^2{+}c^2{+}cf{+}c), \;\;
fb^2(f^3{-}b^2{+}2f^2{+}f), \;\;
b(c^4d{+}2b^3d{+}b^3e{+}2c^3d{-}b^2e{+}c^2d),
\\
&
b(c^4e{-}2b^3d{-}b^3e{+}2c^3e{+}b^2e{+}c^2e), \;\; 
fb(c^2df{-}b^3{+}c^2d{+}cdf{+}cd), \;\;
fb(c^2ef{+}b^3{+}c^2e{+}cef{+}ce),
\\
&
d^3(bd^2{+}2bd{-}bf{-}e), \;\; 
bd^3f(d{+}2), \;\; 
bdf(f^3{-}d^2{+}2f^2{+}f), \;\;
fb(ef^3{+}b^3{+}2ef^2{+}ef),
\\
&
c^4d^2{-}b^4f{+}2c^3d^2{+}c^2d^2, \;\; 
c^4de{+}b^4f{+}2c^3de{+}c^2de,
\\
&
c^4e^2{-}b^4f{+}2c^3e^2{+}c^2e^2, \;\; 
f(c^2d^2f{-}b^4{+}c^2d^2{+}cd^2f{+}cd^2), \;\;
f(c^2def{+}b^4{+}c^2de{+}cdef{+}cde),
\\
&
f(c^2e^2f{-}b^4{+}c^2e^2{+}ce^2f{+}ce^2), \;\; 
f(cd^3f{+}b^4{+}cd^3), \;\;
d^3(d{+}f{+}1)(d^2{-}f),
\\
&
d^3(d{+}f{+}1)(d^2{-}df{+}f^2{-}f), \;\; 
d^2(d{+}f{+}1)({-}f{+}d)(d^2{-}f^2{-}f), \;\;
fd(ef^3{+}bd^2{+}2ef^2{+}ef),
\\
&
f(e^2f^3{-}b^4{+}2e^2f^2{+}e^2f), \;\;
b(b{-}c)(b{+}c{+}1)(b^4{+}b^2c^2{+}c^4{-}b^3{+}b^2c{+}bc^2{+}2c^3{+}bc{+}c^2),
\\
&
fb(c^4f{+}c^4{+}2c^3f{+}b^3{+}2c^3{+}c^2f{+}c^2), \;\;
fb(c^2f^3{+}2c^2f^2{+}cf^3{+}b^3{+}c^2f{+}2cf^2{+}cf),
\\
&
fb(f^5{+}3f^4{+}d^3{+}3f^3{+}f^2), \;\;
c^6d{+}3c^5d{-}2b^4d{-}b^4e{+}3c^4d{+}b^3e{+}c^3d,
\\
&
c^6e{+}3c^5e{+}2b^4d{+}b^4e{+}3c^4e{-}b^3e{+}c^3e, \;\;
f(c^4df{+}c^4d{+}2c^3df{+}b^4{+}2c^3d{+}c^2df{+}c^2d),
\\
&
f(c^4ef{+}c^4e{+}2c^3ef{-}b^4{+}2c^3e{+}c^2ef{+}c^2e), \;\;
f(c^2df^3{+}2c^2df^2{+}cdf^3{+}b^4{+}c^2df{+}2cdf^2{+}cdf),
\\
&
f(c^2ef^3{+}2c^2ef^2{+}cef^3{-}b^4{+}c^2ef{+}2cef^2{+}cef),
\\
&
d(f{+}d{+}1)(f{-}d)(d^4{+}d^2f^2{+}f^4{-}d^3{+}d^2f{+}df^2{+}2f^3{+}df{+}f^2), \;\; 
f(ef^5{+}3ef^4{-}bd^3{+}3ef^3{+}ef^2),
\\
&
(c{+}b{+}1)(c{-}b)
(b^6{+}b^4c^2{+}b^2c^4{+}c^6{+}3b^5{+}b^4c{+}2b^3c^2{+}2b^2c^3{+}bc^4{+}3c^5{-}b^4{+}2b^3c{+}2b^2c^2{+}2bc^3
\\
&\qquad\qquad\qquad\;
{+}3c^4{+}b^2c{+}bc^2{+}c^3),
\\
&
f(c^6f{+}c^6{+}3c^5f{+}3c^5{+}3c^4f{-}b^4{+}3c^4{+}c^3f{+}c^3), \;\;
\\
&
f(c^4f^3{+}2c^4f^2{+}2c^3f^3{+}c^4f{+}4c^3f^2{+}c^2f^3{-}b^4{+}2c^3f{+}2c^2f^2{+}c^2f),
\\
&
f(c^2f^5{+}3c^2f^4{+}cf^5{+}3c^2f^3{+}3cf^4{-}b^4{+}c^2f^2{+}3cf^3{+}cf^2),
\\
&
(f{-}d)(f{+}d{+}1)(d^6{+}d^4f^2{+}d^2f^4{+}f^6{+}3d^5{+}d^4f{+}2d^3f^2{+}2d^2f^3{+}df^4{+}3f^5{-}d^4{+}2d^3f{+}2d^2f^2
\\
&\qquad\qquad\qquad\;\;
{+}2df^3{+}3f^4{+}d^2f{+}df^2{+}f^3).
\end{align*}
\caption{%
Degree 2, multiplicity 2, case 1 ($a = 1$, other parameters free):
The deglex Gr\"obner basis for the fourth determinantal ideal
of the lower right block of the partial Smith form of 
the matrix of consequences; see \S\ref{case1fourth}.
}
\label{gb93}
\end{table}



\end{document}